\documentclass[a4paper]{article}

\usepackage{geometry}
\usepackage{enumerate}
\usepackage{indentfirst} 
\usepackage[utf8]{inputenc}
\usepackage{color}
\usepackage{mathtools}
\usepackage{amsthm,amssymb,amsfonts}
\usepackage{bm}
\usepackage{stmaryrd}
\usepackage[T1]{fontenc}
\usepackage{setspace}
\usepackage{graphicx}
\usepackage{tcolorbox}
\usepackage{listings}
\usepackage{float}
\usepackage{subcaption}
\usepackage{geometry}
\usepackage{wrapfig}
\usepackage[bookmarks=true,bookmarksnumbered=true,bookmarksopen=true,bookmarksopenlevel=2,backref=false]
 {hyperref}
\hypersetup{
colorlinks,
linkcolor=black,
anchorcolor=blue,
citecolor=olive
}
\usepackage{makeidx}
\usepackage{aliascnt}
\usepackage[nameinlink]{cleveref}
\usepackage{mathrsfs}
\usepackage{booktabs}
\usepackage{autobreak}
\allowdisplaybreaks

\usepackage[shortlabels, inline]{enumitem}
\setenumerate[1]{itemsep=0pt,partopsep=0pt,parsep=\parskip,topsep=5pt}
\setitemize[1]{itemsep=0pt,partopsep=0pt,parsep=\parskip,topsep=5pt}
\setdescription[1]{itemsep=0pt,partopsep=0pt,parsep=\parskip,topsep=5pt}

\let\emph\relax  
\DeclareTextFontCommand{\emph}{\bfseries\em}

\makeatletter
\newcommand\RedeclareMathOperator{%
  \@ifstar{\def\rmo@s{m}\rmo@redeclare}{\def\rmo@s{o}\rmo@redeclare}%
}
\newcommand\rmo@redeclare[2]{%
  \begingroup \escapechar\m@ne\xdef\@gtempa{{\string#1}}\endgroup
  \expandafter\@ifundefined\@gtempa
     {\@latex@error{\noexpand#1undefined}\@ehc}%
     \relax
  \expandafter\rmo@declmathop\rmo@s{#1}{#2}}
\newcommand\rmo@declmathop[3]{%
  \DeclareRobustCommand{#2}{\qopname\newmcodes@#1{#3}}%
}
\@onlypreamble\RedeclareMathOperator
\makeatother

\RedeclareMathOperator{\Re}{Re}
\RedeclareMathOperator{\Im}{Im}

\theoremstyle{plain}
\newtheorem{theorem}{Theorem}[section]
\newtheorem*{theorem*}{Theorem}

\newaliascnt{lemma}{theorem}

\aliascntresetthe{lemma}

\newtheorem*{lemma*}{Lemma}

\newtheorem*{corollary*}{Corollary}
\newtheorem{proposition}[theorem]{Proposition}
\newtheorem*{proposition*}{Proposition}

\newtheorem*{axiom*}{Axiom}

\newtheorem*{property*}{Property}

\theoremstyle{definition}

\newtheorem*{definition*}{Definition}
\newtheorem{example}{Example}[section]
\newtheorem*{example*}{Example}

\newtheorem*{problem*}{Problem}

\newtheorem*{discussion*}{Discussion}

\newtheorem*{notation*}{Notation}

\newtheorem*{assumption*}{Assumption}

\newtheorem*{task*}{Task}

\newtheorem*{idea*}{Idea}
\newtheorem{conjecture}{Conjecture}[section]
\newtheorem*{conjecture*}{Conjecture}

\theoremstyle{remark}
\newtheorem{remark}{Remark}[section]
\newtheorem*{remark*}{Remark}

\newtheorem*{claim*}{Claim}

\newtheorem*{fact*}{Fact}

\usepackage{algorithm}
\usepackage{algpseudocode}
\usepackage{amsmath}

\usepackage{listings}
\usepackage{xcolor}

\lstdefinestyle{lfonts}{
	upquote=true,
	basicstyle   = \linespread{1.0}\footnotesize\ttfamily,
	stringstyle  = \color{brown!50!black},
	identifierstyle = \color{black}, 
	keywordstyle = \color{blue!50!black}\bfseries,
	commentstyle = \color{olive}\rmfamily\itshape,
}
\lstdefinestyle{lnumbers}{
	numbers     = left,
	numberstyle = \scriptsize\sffamily\color{black!50},
	numbersep   = 1em,
	firstnumber = 1,
	stepnumber  = 1,
}
\lstdefinestyle{llayout}{
	breaklines       = true,
	tabsize          = 4,
	columns          = flexible,
	lineskip		 = 0pt,
}
\lstdefinestyle{lgeometry}{
	xleftmargin      = 20pt,
	xrightmargin     = 0pt,
	frame            = tb,
	framesep         = \fboxsep,
	framexleftmargin = 20pt,
}
\lstdefinestyle{lgeneral}{
	style = lfonts,
	style = lnumbers,
	style = llayout,
	style = lgeometry,
}

\usepackage{setspace}
\usepackage{extarrows}

\usepackage{adjustbox}
\usepackage{booktabs}
\usepackage{longtable}
\usepackage{nicematrix}
\NiceMatrixOptions{
  custom-line = { letter = : , command = vdashline , tikz = dashed }, 
  custom-line = { letter = ; , command = hdashline , tikz = dashed }  
}
\usepackage{standalone} 
\usepackage{subcaption}

\newcommand{\Span}{\operatorname{span}}

\usepackage{bm}
\newcommand*{\tensor}[1]{\mathbf{#1}}
\renewcommand*{\matrix}[1]{\mathbf{#1}}
\renewcommand*{\vector}[1]{\bm{#1}}

\usepackage{mathtools}
\usepackage{makecell}

\usetikzlibrary{patterns, decorations.pathreplacing}
\usepackage{contour}
\contourlength{1.2pt}

\usepackage{forest}

\makeatletter
\newdimen\myshiftaligndimen
\forestset{
    declare dimen register={shift align},
    shift align'=0pt,
    default preamble={
        TeX={%
          \pgfinterruptpicture
          \myshiftaligndimen=\fontdimen2\font@name
          \global\myshiftaligndimen\myshiftaligndimen
          \endpgfinterruptpicture
        },
        /tikz/baseline={([yshift=-0.75\myshiftaligndimen]current bounding box.center)},
        for tree={%
            grow=north,
            content=,
            circle,
            fill,
            minimum size=2.5pt,
            inner sep=0.5pt,
            l=3pt,
            l sep=3pt,
            s sep=2pt,
            edge={-, thick},
        },
    },
    midtree/.style={
         /tikz/every node/.append style={font=\small},
        for tree={%
            label/.option=content,
            grow=north,
            content=,
            circle,
            fill,
            minimum size=3pt,
            inner sep=0pt,
            l=8pt,
            l sep=8pt,
            s sep=9pt,
            edge={-, very thick},
        },
    },
    bigtree/.style={
        for tree={%
            label/.option=content,
            grow=north,
            content=,
            circle,
            fill,
            minimum size=4pt,
            inner sep=0.5pt,
            l=20pt,
            l sep=12pt,
            s sep=12pt,
            edge={-, very thick},
        },
    },
    EL/.style 2 args={
        edge label={node[midway, font=\sffamily\scriptsize, #1]{#2}},
    },
}
\makeatother

\usepackage{biblatex}

\title{Low Stage and High Order Explicit Runge--Kutta Methods via $Q$- and $D$-Conditions: Several Construction Details}
\author{Junyuan He and Jizu Huang}

\begin{document}
\maketitle

\begin{abstract}
This note provides additional details on the construction of the $Q$/$D$-space framework for sufficient order conditions of explicit Runge--Kutta (ERK) methods. Specifically, it presents a general version of the sufficiency theorem, several examples illustrating the verification of the sufficient conditions, a detailed construction of an ERK scheme of order $p=10$, the assembly of the associated linear systems, a complexity analysis of the construction algorithm, and tables of coefficients for the constructed ERK methods.
\end{abstract}

\section{Another sufficient condition}\label{Another sufficient condition}

We begin by recalling Theorem 3.1 in \cite{he2026low} as follows.

\begin{theorem}\label{thm:my_sufficiency_thm}
    An ERK method has order at least $p$ if the following conditions hold for some $m, n$ with $m \geq n-1$ and $m + n + 1 \geq p$:
    \begin{enumerate}
        \item Quadrature conditions for $[{\,\begin{forest} [] \end{forest}\,}^{k-1}]$, denoted $B(p)$:
        \begin{equation*}
            \vector{b} \cdot \vector{c}^{\odot (k-1)} = \frac{1}{k}, \quad k = 1,\, \ldots,\, p.
        \end{equation*}
        \item Q-orthogonality conditions, denoted $QO(m)$:
        \begin{equation*}
            \vector{b} \odot Q_m = \{ \vector{0} \}.
        \end{equation*}
        \item D-orthogonality conditions, denoted $DO(n)$: 
        \begin{equation*}
            D_n \cdot \vector{c}^{\odot (k-1)} = \{ 0 \}, \quad k = 1,\, \ldots, p-n.
        \end{equation*}
        \item Q-and-D mutual orthogonality conditions, denoted $QD(m,n)$:
        \begin{equation*}
            Q_m \odot D_n = \{ \vector{0} \}.
        \end{equation*}
        \item Q-ring condition (the space $Q_m$ form a ring under Hadamard product), denoted $QR(m)$: If $m_2 \leq m_1 \leq m$, 
        \begin{equation*}
            Q_{m_1} \odot Q_{m_2} = Q_{m_1}.
        \end{equation*}
    \end{enumerate}
\end{theorem}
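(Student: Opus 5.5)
We have to guess the definitions of $Q_m$ and $D_n$. Presumably $Q_m$ is the span of the residual vectors $\vector{q}(\tau)=A\Phi'(\tau)-\vector{c}^{\odot|\tau|}/|\tau|$-type stage-order defects for trees with $|\tau|\le m$, i.e. the vectors measuring how far the internal stages fail the order conditions of the subtrees. $D_n$ is the dual family: row vectors $\vector{d}$ built from $\vector{b}$ times products $A$ and Hadamard factors, $\vector{b}^\top(\ldots)A - \vector{b}^\top\odot(\vector 1-\vector c)\ldots$, that measure failure of "reversed" (adjoint) conditions for trees of height up to $n$. I would fix these definitions and prove the theorem by induction on the order $|t|\le p$ of a rooted tree $t$, showing $\vector b\cdot\Phi(t)=1/\gamma(t)$.

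The plan is to write every elementary weight $\vector b\cdot\Phi(t)$ by cutting $t$ at the root: $\Phi(t)=\bigodot_i (A\Phi(t_i))$ where $t_i$ are the root's subtrees. Replace each factor $A\Phi(t_i)$ by its "exact" value $\vector c^{\odot|t_i|}/\gamma(t_i)$ plus the defect $\vector q(t_i)\in Q_{|t_i|}$. Expanding the Hadamard product gives the exact bushy term, handled by $B(p)$, plus terms containing at least one defect factor. By $QR(m)$, products of defects and powers of $\vector c$ (note $\vector c^{\odot j}$ and $\vector 1$ lie in the ring generated) stay in $Q_{\max}$, so if every subtree satisfies $|t_i|\le m$, all defect terms lie in $Q_m$ and are annihilated by $\vector b$ via $QO(m)$. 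This settles all trees whose root subtrees have order $\le m$.

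For trees with a large subtree ($|t_i|>m$), only one such subtree can exist when $m+n+1\ge p$ and $m\ge n-1$. The remaining part of the tree, the "trunk" from the root up to that subtree, has order at most $n$ roughly. I would then peel from the other end: view $\vector b\cdot\Phi(t)$ as $\vector d\cdot \Phi(s)$ where $\vector d=\vector b^\top(\text{trunk operator})$. I would replace $\vector d$ by its exact adjoint value plus a defect in $D_n$. The exact part reduces $t$ to a tree with strictly fewer vertices above the trunk, or to a combination of lower-order or bushier trees, using the integration-by-parts identity $\int_0^1 (1-x)^k\ldots$ that the exact adjoint encodes. Such trees are covered by induction. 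The defect part $D_n\cdot\Phi(s)$ is treated by decomposing $\Phi(s)$ in turn as polynomial in $\vector c$ plus $Q$-defects. The polynomial part, of degree at most $p-n-1$, vanishes by $DO(n)$, and the $Q$-parts vanish by $QD(m,n)$ and the ring property.

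The main obstacle is this last step. It requires bookkeeping that the degrees line up exactly: the trunk has order $\le n$ while the remainder has order $\le p-n$, matching the index range in $DO(n)$. It also requires that the Hadamard products of defects arising inside $\Phi(s)$ land in $Q_m$ rather than in some higher $Q$; this is where $m\ge n-1$ and $QR(m)$ are needed. I would first verify the identity on a small case ($p=5$, $m=2$, $n=2$) to calibrate the definitions, then run the double induction on (order, size of trunk).
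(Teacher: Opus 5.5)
Your architecture is the paper's. You $\vector{q}$-reduce root subtrees of order $\le m$ and kill the remainders with $QO(m)$. Otherwise the large root subtree is unique ($2m+2\ge m+n+1\ge p$): you $\vector{q}$-reduce its siblings, split $(\vector{b}\odot\vector{c}^{\odot k})\mathop{\times^1}\matrix{A}=\vector{d}_k+\frac{1}{k+1}\vector{b}\odot(\vector{1}-\vector{c}^{\odot(k+1)})$, send the exact part to the induction, and kill the rest with $QD(m,n)$ and $DO(n)$.

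The step that fails as written is the final degree count. Trunk order $\ell$ and remainder order $|s|$ add up to $|t|$, so for $|t|=p$ your claim ``trunk $\le n$, remainder $\le p-n$'' holds only when $\ell=n$. Take the tree whose root has a single child carrying $p-2$ leaves, with e.g.\ $p=10$, $m=4$, $n=5$. Then $\ell=1$ and the defect term is $\vector{d}_0\cdot\vector{c}^{\odot(p-2)}$, while $DO(n)$ only reaches $\vector{c}^{\odot(p-n-1)}$.

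The missing idea is the index structure of the $D$-hierarchy. The index counts trunk vertices, $\vector{d}_j\in D_{j+1}$, and $D_j\mathop{\times^1}\matrix{A}$ and $D_j\odot\vector{c}$ both lie in $D_{j+1}$ (cf.\ $D_2=\Span\{\vector{d}_0,\vector{d}_1,\vector{d}_0\mathop{\times^1}\matrix{A},\vector{d}_0\odot\vector{c}\}$ for Nystr\"om). This lets you move $n-\ell$ powers of $\vector{c}$ into the $D$-vector: $\vector{d}\cdot\vector{c}^{\odot(|s|-1)}=(\vector{d}\odot\vector{c}^{\odot(n-\ell)})\cdot\vector{c}^{\odot(|t|-n-1)}$. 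The first factor lies in $D_n$ and the remaining degree is $\le p-n-1$. The paper makes the same move when it writes $R_{12}=(\vector{d}_k\odot\vector{c}^{\odot k_1'})\cdot(\vector{q}\odot\vector{c}^{\odot k_2'})$. The same closure is also what makes your ``defect in $D_n$'' true for a multi-level trunk operator.

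The trunk must also run through the whole chain of subtrees of order $>m$, not stop at the first one. Otherwise $\vector{\Phi}(s)$ still contains a factor $\matrix{A}\vector{\Phi}(s_1)$ with $|s_1|>m$ and is not a polynomial plus $Q$-defects. Because the top subtree still has $|s|\ge m+1$, you keep $\ell\le p-m-1\le n$. This is the paper's Case 2.2 recursion, pushing $(\vector{d}_k\odot\vector{c}^{\odot k''})\mathop{\times^1}\matrix{A}\in D_{k+k''+2}$ upward.

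On the $Q$ side, $QR(m)$ is used in the wrong place. $QO(m)$ and $QD(m,n)$ are Hadamard conditions, so any term with a single Hadamard factor from $Q_m$ is annihilated by $\vector{b}$, or by any $\vector{d}\in D_n$, whatever the other factors are. Nothing has to ``stay in $Q_m$''. Your stated reason also fails: $B(p)$ and $QO(m)$ forbid $\vector{c}^{\odot j}\in Q_m$, and multiplying by $\vector{c}$ raises the $Q$-index.

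Where QR is actually needed is the $\vector{q}$-reduction lemma. The $Q_k$ of the statement are generated from the bushy defects $\vector{q}_j=\matrix{A}\vector{c}^{\odot j}-\vector{c}^{\odot(j+1)}/(j+1)$ (e.g.\ $Q_3=\Span\{\vector{q}_1,\vector{q}_2,\matrix{A}\vector{q}_1\}$ for Cooper--Verner). They are not defined as the span of all tree defects. So you must prove, by induction on $\tau$, that $\matrix{A}\vector{\Phi}(\tau)-\vector{c}^{\odot|\tau|}/\gamma(\tau)\in Q_{|\tau|}$ for $|\tau|\le m$. Expanding $\matrix{A}\bigodot_j(\vector{c}^{\odot|\tau_j|}/\gamma(\tau_j)+\vector{q}^{(j)})$ produces cross terms $\vector{q}^{(i)}\odot\vector{q}^{(j)}$. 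QR returns these to $Q_{\max(|\tau_i|,|\tau_j|)}$, so the outer $\matrix{A}$ lands in $Q_{|\tau|}$.

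Declaring $Q_m$ to be the span of tree defects makes this lemma a tautology and leaves QR unused, but it proves the result for different spaces; the stated theorem still needs the lemma. Likewise, $m\ge n-1$ is not about products. Combined with $m+n+1\ge p$ it gives $2m+2\ge p$, which is what makes the large child unique at every level of the trunk.
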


Next, we present another sufficient condition for ERK methods, which can be viewed as a more general version of the sufficiency theorem compared to Theorem 3.1 in \cite{he2026low}.

\begin{theorem}\label{thm:my_sufficiency_thm_general}
    An ERK method has order at least $p$ if the following conditions hold for some $m, n$ with $m \geq n-1$ and $m + n + 1 \geq p$.
    \begin{enumerate}
        \item Quadrature conditions for $[{\,\begin{forest} [] \end{forest}\,}^{k-1}]$, denoted $B(p)$:
        \begin{equation*}
            \vector{b} \cdot \vector{c}^{\odot (k-1)} = \frac{1}{k}, \quad k = 1,\, \ldots,\, p.
        \end{equation*}
        \item Q-orthogonality conditions, denoted $QO(m)$:
        \begin{equation*}
            \vector{b} \odot Q_m = \{ \vector{0} \}.
        \end{equation*}
        \item D-orthogonality conditions, denoted $DO(n)$: 
        \begin{equation*}
            D_n \cdot \vector{c}^{\odot (k-1)} = \{ 0 \}, \quad k = 1,\, \ldots, p-n.
        \end{equation*}
        \item Weak Q-and-D mutual orthogonality conditions, denoted $QD_{\mathrm{weak}}(m,n)$:
        \begin{equation*}
            Q_m \cdot D_n = \{ 0 \}.
        \end{equation*}
        \item Pivot residual condition, denoted $PR(n)$: 
        Define the space of pivot residual vectors:
        \begin{equation*}
            W_n = \{ (\vector{q} \odot \vector{d}) \mathop{\times^1} \matrix{A} - \vector{q} \odot (\vector{d} \mathop{\times^1} \matrix{A}) \mid \vector{q} \in Q_{l_1},\; \vector{d} \in D_{l_2},\; l_1 + l_2 \leq n-1  \}.
        \end{equation*}
        The $PR(n)$ condition is that 
        \begin{equation}
            W_n \cdot \left\{ \vector{\Phi}(t) \mid m+1 \leq |t| \leq p-3 \right\} = \{ 0 \}.
        \end{equation}
        \item Q-ring condition (the space $Q_m$ form a ring under Hadamard product), denoted $QR(m)$: If $m_2 \leq m_1 \leq m$, 
        \begin{equation*}
            Q_{m_1} \odot Q_{m_2} = Q_{m_1}.
        \end{equation*}
    \end{enumerate}
\end{theorem}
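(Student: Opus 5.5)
The plan is to rerun the proof of Theorem~\ref{thm:my_sufficiency_thm} from \cite{he2026low} unchanged except at the places where $QD(m,n)$ is used. At each such place we substitute $QD_{\mathrm{weak}}(m,n)$ together with $PR(n)$. The conditions $B(p)$, $QO(m)$, $DO(n)$ and $QR(m)$ appear identically in both theorems, so every step of the original argument that relies only on them carries over verbatim.

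First, recall the skeleton of that argument. We must show $\vector{b}\cdot\vector{\Phi}(t)=1/\gamma(t)$ for every rooted tree $t$ with $|t|\le p$. We argue by induction on $|t|$ and on the structure of $t$.

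\textbf{Small subtrees.} Write $t=[t_1,\dots,t_k]$. For each subtree with $|t_i|\le m$, replace $\matrix{A}\vector{\Phi}(t_i)$ by $\vector{c}^{\odot|t_i|}/(|t_i|\gamma(t_i)) + \vector{q}_i$ with $\vector{q}_i\in Q_{|t_i|}$. Expanding the Hadamard product and applying $QR(m)$ collapses all products of residuals into a single $Q$-vector. The term $\vector{b}\odot\vector{q}$ then vanishes by $QO(m)$, and the pure bushy-tree term is handled by $B(p)$.

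\textbf{Large subtrees.} When some subtree has $|t_i|\ge m+1$, the constraint $m+n+1\ge p$ leaves at most $n-1$ nodes outside it. We then pass the weight $\vector{b}$ through the chain of $\matrix{A}$'s from the root down to that subtree. Each pass replaces $\vector{b}\odot\vector{c}^{\odot j}$ times $\matrix{A}$ by its exact quadrature value plus a $D$-residual $\vector{d}\in D_{l}$ with $l\le n-1$. The $D$-residuals paired against pure powers of $\vector{c}$ vanish by $DO(n)$. In the original proof, what remains are mixed terms of the form $(\vector{q}\odot\vector{d})\mathop{\times^1}\matrix{A}\cdots$, and these are killed by the strong condition $\vector{q}\odot\vector{d}=\vector{0}$.

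The key step is to redo this mixed-term bookkeeping with the weak condition. The strong identity is not available, so we commute the $\matrix{A}$ past the $Q$-factor using
\begin{equation*}
(\vector{q}\odot\vector{d})\mathop{\times^1}\matrix{A}=\vector{q}\odot(\vector{d}\mathop{\times^1}\matrix{A})+\vector{w},\qquad \vector{w}\in W_n .
\end{equation*}
Iterating this identity pushes all $\matrix{A}$-multiplications onto the $D$-factor. We then need two facts:
\begin{itemize}
\item $\vector{d}\mathop{\times^1}\matrix{A}$ again decomposes into a $\vector{c}$-polynomial plus a $D$-residual of one higher index (this should be the defining recursion of the $D_l$), with the index bound staying below $n$;
\item every correction $\vector{w}$ is contracted only against $\vector{\Phi}(s)$ for a subtree $s$ with $m+1\le|s|\le p-3$. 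The lower bound holds because we are in the large-subtree case. The upper bound holds because at least the root, one $\matrix{A}$-edge and one $Q$-carrying node are consumed.
\end{itemize}
Hence $\vector{w}\cdot\vector{\Phi}(s)=0$ by $PR(n)$. At the end of the chain, the surviving mixed term is a plain inner product $\vector{q}\cdot\vector{d}$ of a $Q$-residual and a $D$-residual, and this vanishes by $QD_{\mathrm{weak}}(m,n)$.

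I expect the main obstacle to be verifying the index and size bounds precisely, namely that the residuals produced always satisfy $l_1+l_2\le n-1$ and that the trees paired with them satisfy $m+1\le|t|\le p-3$. This requires tracking, along the path from root to the large subtree, how many nodes each $Q$- and $D$-residual absorbs. I would first handle the case of a single large subtree attached by a path of length $1$, and then induct on the path length.
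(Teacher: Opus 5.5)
Your proposal follows the paper's proof essentially step for step: both rerun Theorem~\ref{thm:my_sufficiency_thm} and, in the mixed remainder terms where $QD(m,n)$ was used, split $(\vector{q}\odot\vector{d})\mathop{\times^1}\matrix{A}=\vector{w}+\vector{q}\odot(\vector{d}\mathop{\times^1}\matrix{A})$. They then kill $\vector{w}\cdot\vector{\Phi}(s)$ by $PR(n)$ (using $m+1\le|s|\le p-3$ and $l_1+l_2\le n-1$), and recurse with interleaved $\vector{q}$-reductions, absorbing stray $\vector{c}$-powers into $\vector{q}$ or $\vector{d}$, until a plain $\vector{d}\cdot\vector{q}$ remains for $QD_{\mathrm{weak}}(m,n)$. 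Two small corrections: $\vector{d}\mathop{\times^1}\matrix{A}$ lies in the next $D$-space outright with no $\vector{c}$-polynomial part, which matters because a bare $(\vector{q}\odot\vector{c}^{\odot j})\cdot\vector{\Phi}(s)$ would be killed by no hypothesis; and the small-subtree expansion is $\matrix{A}\vector{\Phi}(t_i)=\vector{c}^{\odot|t_i|}/\gamma(t_i)+\vector{q}_i$, without the extra factor $|t_i|$.
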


\begin{proof}
    The proof is essentially the same as that of \Cref{thm:my_sufficiency_thm} except case 2.1 and case 2.2, in which the weak $Q$-and-$D$ mutual orthogonality conditions prevents direct reduction, and the pivot residual condition $PR(n)$ is used to proceed with the reduction and show that the remainder term vanishes. We give a sketch of the argument for these two cases.

    Since the $QD(m,n)$ condition is replaced by the $QD_{\mathrm{weak}}(m,n)$ condition, the following remainder term in Case 2.1 
    \begin{equation*}
        R_{12} = \vector{d}_k^\top (\vector{q} \odot \vector{c}^{\odot k'})
    \end{equation*}
    may no longer vanish for some $\vector{q} \in Q_m$.Since $m+n+1 \geq p$,  we can rewrite it as 
    \begin{equation*}
        R_{12} = \underbrace{(\vector{d}_k \odot \vector{c}^{\odot k'_1})}_{{\in D_n}} \cdot \underbrace{(\vector{q} \odot \vector{c}^{\odot k'_2})}_{{\in Q_m}}  = 0.
    \end{equation*}
    Therefore, $R_{12}$ also vanishes by the $QD_{\mathrm{weak}}(m,n)$ condition (illustrated in \Cref{fig:general-case2.1}).

    In case 2.2, for the remainder term $R_2$,
    we still perform the $\vector{q}$-reduction using Lemma 3.2 on the subtrees $t_{1,1}, \dots, t_{1,r_{1}}$, where $|t_{1,1}| > m$ and $|t_{1,j}| \leq m$ for $j=2, \dots, r_{1}$, resulting the following three types of terms: 
    \begin{align*}
        \text{Principal term:\quad} & \vector{d}_k^\top (\matrix{A}\vector{\Phi}(t_{1,1}) \odot \vector{c}^{\odot k''}), \\
        \text{Remainder term (type 1):\quad} & R_{21} = \vector{d}_k^\top (\matrix{A}\vector{\Phi}(t_{1,1}) \odot \vector{q}), \\
        \text{Remainder term (type 2):\quad} & R_{22} = \vector{d}_k^\top \big(\matrix{A}\vector{\Phi}(t_{1,1}) \odot (\tilde{\vector{q}} \odot \vector{c}^{\odot \tilde{k}''})\big),
    \end{align*}
    for some $\vector{q}, \tilde{\vector{q}} \in Q_m$. 
    The pricipal term of this reduction is dealt with the same way as \Cref{thm:my_sufficiency_thm}, but the remainder terms are not immediately zero because the $QD(m,n)$ condition is not applicable. To deal with the remainder terms, we first claim that the type 2 terms can be transformed to type 1 terms. Define $\tilde{\vector{q}}' := \tilde{\vector{q}} \odot \vector{c}^{\odot \tilde{k}''}$. Since $|t_{1,1}| \ge m+1$, by some simple calculation we get $\tilde{\vector{q}}' \in Q_m$. This becomes exactly the type 1 term. Now we only need to consider the type 1 terms, which can be written as the form
    \begin{align*}
      R_{21} &= \left( (\vector{q} \odot \vector{d}_k) \mathop{\times^1} \matrix{A} \right) \cdot \vector{\Phi}(t_{1,1}) \\
      & = \left( (\vector{q} \odot \vector{d}_k) \mathop{\times^1} \matrix{A} \right) \cdot \left( \bigodot_{j=1}^{r_{1,1}} \matrix{A}\vector{\Phi}(t_{1,1,j}) \right)
    \end{align*}
    for some $\vector{q} \in Q_{l_1}$ and $l_1 \leq p-k-2-|t_{1,1}| \leq p-k-m-3 \leq n-k-2$. By definition, $\vector{d}_k \in D_{l_2}$ with $l_2 = k+1$, so $l_1 + l_2 \leq n-1$. Hence the pivot residual condition $PR(n)$ applies:
    there exists some $\vector{w} \in W_n$ such that
    \begin{equation*}
        \vector{w} = (\vector{q} \odot \vector{d}_k) \times^1 \matrix{A} - \vector{q} \odot (\vector{d}_k \times^1 \matrix{A}).
    \end{equation*}
    So, $R_{21}$ splits into two terms:
    \begin{equation*}
        R_{21} = \vector{w} \cdot \left( \bigodot_{j=1}^{r_{1,1}} \matrix{A}\vector{\Phi}(t_{1,1,j}) \right) 
        + \left( \vector{q} \odot \underbrace{(\vector{d}_k \times^1 \matrix{A})}_{\in D_{k+1}} \right) \cdot \left( \bigodot_{j=1}^{r_{1,1}} \matrix{A}\vector{\Phi}(t_{1,1,j}) \right).
    \end{equation*}
    The first term vanishes by the $PR(n)$ condition. 
    Through the recursive use of the $PR(n)$ condition and the $\vector{q}$-reduction, the second term eventually becomes $\vector{d}_n^\top \vector{q} $ for some $\vector{q} \in Q_m$, which vanishes due to the $QD_{\mathrm{weak}}(m,n)$ condition. 

    By repeating the proof of \Cref{thm:my_sufficiency_thm}, the proof of this theorem can be completed.
\end{proof}

\begin{figure}[!ht]
    \centering
    \begin{forest} bigtree
      [, label=below:{\small $\vector{d}_k \mathop{\times^1} \tensor{I}_s^3$}
        [ , label=above:{\small $\vector{c}^{\odot k}$}]
        [, label=above:{\small $\vector{q}$}, edge={very thick} ]
      ]
    \end{forest}
    $\quad = \quad$
    \begin{forest} bigtree
      [, label=below:{\small $\vector{d}_k \odot \vector{c}^{k_1}$}
        [, label=above:{\small $\vector{q} \odot \vector{c}^{k_2}$}, edge={very thick} ]
      ]
    \end{forest}
    \caption{Case 2.1: reduction to a weak $QD$ orthogonality term.}
    \label{fig:general-case2.1}
\end{figure}

\begin{figure}[!ht]
    \centering
    \begin{forest} bigtree
      [, label=below:{\small $\vector{d}_k \mathop{\times^1} \tensor{I}_s^3$}
        [ , label={[rotate=90, anchor=west]above:{\small $\vector{q} \in Q_m$}}]
        [, label={[rotate=90, anchor=west]above:{\small $\matrix{A} \vector{\Phi}(t_{1,1})$}}, edge={very thick}]
      ]
    \end{forest}
    $\quad = \quad $
    \begin{forest}
      bigtree
      [, label=below:{\small $\vector{d}_k \mathop{\times^1} \tensor{I}_s^3$}, s sep=18pt
        [ , label={[anchor=west]above:{\small $\vector{q}$}}]
        [, label=left:{\small $\matrix{A} \tensor{I}_s^4$}, s sep=12pt
            [ , label={[rotate=90, anchor=west]above:{\small $\matrix{A}\vector{\Phi}(t_{1,1,3})$}}, l=30pt] 
            [ , label={[rotate=90, anchor=west]above:{\small $\matrix{A}\vector{\Phi}(t_{1,1,2})$}}, l=30pt] 
            [ , label={[rotate=90, anchor=west]above:{\small $\matrix{A}\vector{\Phi}(t_{1,1,1})$}}, l=30pt]
        ]
      ]
    \end{forest}
    $\quad \xrightarrow{\text{definition of }W_n} \quad$
    \begin{forest} bigtree
      [, label=below:{\small $\vector{w} \mathop{\times^1} \tensor{I}_s^4$}, s sep=12pt
        [ , label={[rotate=90, anchor=west]above:{\small $\matrix{A}\vector{\Phi}(t_{1,1,3})$}}] 
        [ , label={[rotate=90, anchor=west]above:{\small $\matrix{A}\vector{\Phi}(t_{1,1,2})$}}] 
        [ , label={[rotate=90, anchor=west]above:{\small $\matrix{A}\vector{\Phi}(t_{1,1,1})$}}]
      ]
    \end{forest}
    $\quad + \quad$
    \begin{forest}
      bigtree
      [, label=below:{\small $\vector{d}_k$}
        [, label=left:{\small $\matrix{A} \tensor{I}_s^5$}, s sep=12pt
            [ , label={above:{\small $\vector{q}$}}, l=30pt]
            [ , label={[rotate=90, anchor=west]above:{\small $\matrix{A}\vector{\Phi}(t_{1,1,3})$}}, l=30pt] 
            [ , label={[rotate=90, anchor=west]above:{\small $\matrix{A}\vector{\Phi}(t_{1,1,2})$}}, l=30pt] 
            [ , label={[rotate=90, anchor=west]above:{\small $\matrix{A}\vector{\Phi}(t_{1,1,1})$}}, l=30pt]
        ]
      ]
    \end{forest}

    \begin{flushleft}
      where
    \end{flushleft}

    \begin{forest}
      bigtree
      [, label=below:{\small $\vector{d}_k$}
        [, label=left:{\small $\matrix{A} \tensor{I}_s^5$}, s sep=12pt
            [ , label={above:{\small $\vector{q}$}}, l=30pt]
            [ , label={[rotate=90, anchor=west]above:{\small $\matrix{A}\vector{\Phi}(t_{1,1,3})$}}, l=30pt] 
            [ , label={[rotate=90, anchor=west]above:{\small $\matrix{A}\vector{\Phi}(t_{1,1,2})$}}, l=30pt] 
            [ , label={[rotate=90, anchor=west]above:{\small $\matrix{A}\vector{\Phi}(t_{1,1,1})$}}, l=30pt]
        ]
      ]
    \end{forest}
    $\quad \xlongequal{ \vector{d}_{k+1} = \vector{d}_k \mathop{\times^1} \matrix{A} } \quad $
    \begin{forest}
      bigtree
      [, label=below:{\small $\vector{d}_{k+1} \mathop{\times^1} \tensor{I}_s^5$}
            [ , label={above:{\small $\vector{q}$}}, l=30pt]
            [ , label={[rotate=90, anchor=west]above:{\small $\matrix{A}\vector{\Phi}(t_{1,1,3})$}}, l=30pt] 
            [ , label={[rotate=90, anchor=west]above:{\small $\matrix{A}\vector{\Phi}(t_{1,1,2})$}}, l=30pt] 
            [ , label={[rotate=90, anchor=west]above:{\small $\matrix{A}\vector{\Phi}(t_{1,1,1})$}}, l=30pt]
      ]
    \end{forest}
    \label{fig:general-case2.2}
    \caption{The pivot residual condition $PR(n)$ is used to show that the remainder term in case 2.2 vanishes. The term is split into two terms by the definition of $W_n$, where the first term vanishes by the $PR(n)$ condition and the second term is reduced to a similar tree with a new root and smaller height.}
\end{figure}

\begin{remark}
    The PR condition is a technical condition that is only needed for the proof, and it is not clear whether it is necessary for the order conditions. It would be interesting to investigate whether this condition can be removed or replaced by a more natural one.

    By our observation in the analysis of various high order ERK schemes, the PR condition is often satisfied in a specific way by letting $W_n$ be the zero space, for example in \Cref{ex:cooperverner8}. Even more specifically, the $QD(m,n)$ condition already guarantees $PR(n)$ when $m$ and $n$ satisfy the requirement in the theorem, which makes \Cref{thm:my_sufficiency_thm} a special case of \Cref{thm:my_sufficiency_thm_general}.
\end{remark}


\section{Examples for checking sufficient conditions}\label{Examples}

We consider several well-known ERK schemes to check whether these newly proposed sufficient conditions are satisfied.


\begin{example}[classical RK4 method]
The classical RK4 method is one of the most famous example of the RK family, whose Butcher tableau is given below.
\begin{equation*}
    \begin{array}[b]{c|cccc}
    0   &            &            &            &            \\
    \frac{1}{2} & \frac{1}{2} &            &            &            \\
    \frac{1}{2} & 0          & \frac{1}{2} &            &            \\
    1           & 0          & 0           & 1          &            \\
    \hline
                & \frac{1}{6} & \frac{1}{3} & \frac{1}{3} & \frac{1}{6}
    \end{array}.
\end{equation*}
This method satisfies the conditions of \Cref{thm:my_sufficiency_thm} till $Q_1$ and $D_2$, and is a special case of our construction presented in this article when $m=1$, $n=2$, $p=4$. The theorem guarantees that it has order 4.

\end{example}

\begin{example}[Nystr\"om's method of order 5]
Nystr\"om's method is a correction of the one originally proposed by Kutta \cite{butcher1996history}. The method has the following tableau
\begin{equation*}
    \begin{array}[b]{c|cccccc}
    0   & 0        & 0        & 0        & 0        & 0        & 0 \\
    \frac{1}{3} & \frac{1}{3} & 0        & 0        & 0        & 0        & 0 \\
    \frac{2}{5} & \frac{4}{25} & \frac{6}{25} & 0        & 0        & 0        & 0 \\
    1   & \frac{1}{4} & -3       & \frac{15}{4} & 0        & 0        & 0 \\
    \frac{2}{3} & \frac{2}{27} & \frac{10}{9} & -\frac{50}{81} & \frac{8}{81} & 0 & 0 \\
    \frac{4}{5} & \frac{2}{25} & \frac{12}{25} & \frac{2}{15} & \frac{8}{75} & 0 & 0 \\[6pt]
    \hline
      & \frac{23}{192} & 0 & \frac{125}{192} & 0 & -\frac{27}{64} & \frac{125}{192}
    \end{array}.
\end{equation*}
We can compute the $\vector{d}$ and $\vector{q}$-vectors of this tableau. 
\begin{align*}
\vector{d}_0 &= 
\begin{bmatrix}
\dfrac{1}{192} & 0 & -\dfrac{25}{576} & \dfrac{1}{36} & \dfrac{9}{64} & -\dfrac{25}{192}
\end{bmatrix}^{\top}, \\
\vector{d}_1 &= 
\begin{bmatrix}
\dfrac{1}{384} & 0 & -\dfrac{35}{1152} & \dfrac{1}{36} & \dfrac{15}{128} & -\dfrac{15}{128}
\end{bmatrix}^{\top}, \\
\vector{q}_0 &= 
\begin{bmatrix}
0 & 0 & 0 & 0 & 0 & 0
\end{bmatrix}^{\top}, \\
\vector{q}_1 &= 
\begin{bmatrix}
0 & -\dfrac{1}{18} & 0 & 0 & 0 & 0
\end{bmatrix}^{\top}.
\end{align*}
We know that $D_1 = \Span\{ \vector{d}_0 \}$, $D_2 = \Span\{ \vector{d}_0, \vector{d}_1, \vector{d}_0 \times^1 \matrix{A}, \vector{d}_0 \odot \vector{c} \}$ and $Q_1 = \Span\{ \vector{q}_0 \} = \{ \vector{0} \}$, $Q_2 = \Span\{ \vector{q}_1 \}$, where $\vector{d}_0 \matrix{A}$ and $\vector{d}_0 \odot \vector{c}$ are computed to be
\begin{align*}
\vector{d}_0 \times^1 \matrix{A} &=
\begin{bmatrix}
0 & 0 & 0 & 0 & 0 & 0
\end{bmatrix}^{\top}, \\
\vector{d}_0 \odot \vector{c} &=
\begin{bmatrix}
0 & 0 & -\dfrac{5}{288} & \dfrac{1}{36} & \dfrac{3}{32} & -\dfrac{5}{48}
\end{bmatrix}^{\top}.
\end{align*}
One can check that they satisfy all conditions 1-5 of \Cref{thm:my_sufficiency_thm} till $Q_2$ and $D_2$, the theorem guarantees that it has order 5.

\end{example}

    

\begin{example}[Cooper and Verner's $8^\text{th}$ order method]\label{ex:cooperverner8}
Cooper and Verner constructed the 11-stage, 8-th order method in 1972 \cite{cooper1972some}. Its Butcher tableau is as follows:
\begin{center}
\begin{adjustbox}{max width=\columnwidth}
$
    \begin{array}[b]{c|c@{}c@{}c@{}c@{}c@{}c@{}c@{}c@{}c@{}c@{}c@{}}
    0 & 0 & 0 & 0 & 0 & 0 & 0 & 0 & 0 & 0 & 0 & 0 \\
    \frac{1}{2} & \frac{1}{2} & 0 & 0 & 0 & 0 & 0 & 0 & 0 & 0 & 0 & 0 \\
    \frac{1}{2} & \frac{1}{4} & \frac{1}{4} & 0 & 0 & 0 & 0 & 0 & 0 & 0 & 0 & 0 \\
    \frac{7+\sqrt{21}}{14} & \frac{1}{7} & \frac{-7-3\sqrt{21}}{98} & \frac{21+5\sqrt{21}}{49} & 0 & 0 & 0 & 0 & 0 & 0 & 0 & 0 \\
    \frac{7+\sqrt{21}}{14} & \frac{11+\sqrt{21}}{84} & 0 & \frac{18+4\sqrt{21}}{63} & \frac{21-\sqrt{21}}{252} & 0 & 0 & 0 & 0 & 0 & 0 & 0 \\
    \frac{1}{2} & \frac{5+\sqrt{21}}{48} & 0 & \frac{9+\sqrt{21}}{36} & \frac{-231+14\sqrt{21}}{360} & \frac{63-7\sqrt{21}}{80} & 0 & 0 & 0 & 0 & 0 & 0 \\
    \frac{7-\sqrt{21}}{14} & \frac{10-\sqrt{21}}{42} & 0 & \frac{-432+92\sqrt{21}}{315} & \frac{633-145\sqrt{21}}{90} & \frac{-504+115\sqrt{21}}{70} & \frac{63-13\sqrt{21}}{35} & 0 & 0 & 0 & 0 & 0 \\
    \frac{7+\sqrt{21}}{14} & \frac{1}{14} & 0 & 0 & 0 & \frac{14-3\sqrt{21}}{126} & \frac{13-3\sqrt{21}}{63} & \frac{1}{9} & 0 & 0 & 0 & 0 \\
    \frac{1}{2} & \frac{1}{32} & 0 & 0 & 0 & \frac{91-21\sqrt{21}}{576} & \frac{11}{72} & \frac{-385-75\sqrt{21}}{1152} & \frac{63+13\sqrt{21}}{128} & 0 & 0 & 0 \\
    \frac{7+\sqrt{21}}{14} & \frac{1}{14} & 0 & 0 & 0 & \frac{1}{9} & \frac{-733-147\sqrt{21}}{2205} & \frac{515+111\sqrt{21}}{504} & \frac{-51-11\sqrt{21}}{56} & \frac{132+28\sqrt{21}}{245} & 0 & 0 \\
    1 & 0 & 0 & 0 & 0 & \frac{-42+7\sqrt{21}}{18} & \frac{-18+28\sqrt{21}}{45} & \frac{-273-53\sqrt{21}}{72} & \frac{301+53\sqrt{21}}{72} & \frac{28-28\sqrt{21}}{45} & \frac{49-7\sqrt{21}}{18} & 0 \\[6pt]
    \hline
     & \frac{1}{20} & 0 & 0 & 0 & 0 & 0 & 0 & \frac{49}{180} & \frac{16}{45} & \frac{49}{180} & \frac{1}{20} 
    \end{array}.
$
\end{adjustbox}
\end{center}

Compute the $\vector{d}$ and $\vector{q}$-vectors of this tableau according to the definitions:

{\thinmuskip=1mu \medmuskip=1mu \thickmuskip=2mu
\begin{align*}
\vector{d}_0 &= 
\left[
\begin{array}{*{11}{>{\scriptstyle}c@{\hspace{2mm}}}}
0 & 0 & 0 & 0 & 0 & 0 & 0 & 0 & 0 & 0 & 0
\end{array}
\right]^{\top}, \\

\vector{d}_1 &= 
\left[
\begin{array}{*{11}{>{\scriptstyle}c@{\hspace{2mm}}}}
0 & 0 & 0 & 0 & -\frac{7}{144} + \frac{7 \sqrt{21}}{720} & -\frac{4}{225} + \frac{8 \sqrt{21}}{1575} & -\frac{7}{1440} - \frac{\sqrt{21}}{1440} & \frac{\sqrt{21}}{1440} + \frac{7}{1440} & \frac{4}{225} - \frac{8 \sqrt{21}}{1575} & \frac{7}{144} - \frac{7 \sqrt{21}}{720} & 0
\end{array}
\right]^{\top}, \\

\vector{d}_2 &= 
\left[
\begin{array}{*{11}{>{\scriptstyle}c@{\hspace{2mm}}}}
0 & 0 & 0 & 0 & -\frac{77}{1080} + \frac{\sqrt{21}}{72} & -\frac{5}{189} + \frac{\sqrt{21}}{105} & -\frac{83}{4320} - \frac{\sqrt{21}}{288} & \frac{\sqrt{21}}{288} + \frac{83}{4320} & \frac{5}{189} - \frac{\sqrt{21}}{105} & \frac{77}{1080} - \frac{\sqrt{21}}{72} & 0
\end{array}
\right]^{\top}, \\

\vector{d}_3 &= 
\left[
\begin{array}{*{11}{>{\scriptstyle}c@{\hspace{2mm}}}}
0 & 0 & 0 & 0 & -\frac{121}{1440} + \frac{23 \sqrt{21}}{1440} & -\frac{187}{6300} + \frac{146 \sqrt{21}}{11025} & -\frac{113}{2880} - \frac{149 \sqrt{21}}{20160} & \frac{149 \sqrt{21}}{20160} + \frac{113}{2880} & \frac{187}{6300} - \frac{146 \sqrt{21}}{11025} & \frac{121}{1440} - \frac{23 \sqrt{21}}{1440} & 0
\end{array}
\right]^{\top}, \\

\vector{q}_0 &= 
\left[
\begin{array}{*{11}{>{\scriptstyle}c@{\hspace{2mm}}}}
0 & 0 & 0 & 0 & 0 & 0 & 0 & 0 & 0 & 0 & 0
\end{array}
\right]^{\top}, \\

\vector{q}_1 &= 
\left[
\begin{array}{*{11}{>{\scriptstyle}c@{\hspace{2mm}}}}
0 & -\frac{1}{8} & 0 & 0 & 0 & 0 & 0 & 0 & 0 & 0 & 0
\end{array}
\right]^{\top}, \\

\vector{q}_2 &= 
\left[
\begin{array}{*{11}{>{\scriptstyle}c@{\hspace{2mm}}}}
0 & -\frac{1}{24} & \frac{1}{48} & -\frac{\sqrt{21}}{392} - \frac{1}{168} & 0 & 0 & 0 & 0 & 0 & 0 & 0
\end{array}
\right]^{\top}.
\end{align*}
}

Thus, the $D$-type spaces are $D_1 = \Span\{ \vector{d}_0 \} = {\vector{0}}$, $D_2 = \Span\{ \vector{d}_1 \}$, $D_3 = \Span\{ \vector{d}_1, \vector{d}_2, \vector{d}_1 \times^1 \matrix{A}, \vector{d}_1 \odot \vector{c} \}$, where

{\thinmuskip=1mu \medmuskip=1mu \thickmuskip=2mu
\begin{align*}
\vector{d}_1 \times^1 \matrix{A} &=
\left[
\begin{array}{*{11}{>{\scriptstyle}c@{\hspace{2mm}}}}
0 &
0 &
0 &
0 &
0 &
-\frac{1}{350} - \frac{\sqrt{21}}{3150} &
\frac{1}{160} + \frac{\sqrt{21}}{720} &
-\frac{1}{160} - \frac{\sqrt{21}}{720} &
\frac{1}{350} + \frac{\sqrt{21}}{3150} &
0 &
0
\end{array}
\right]^{\top},
\\
\vector{d}_1 \odot \vector{c} &=
\left[
\begin{array}{*{11}{>{\scriptstyle}c@{\hspace{2mm}}}}
0 &
0 &
0 &
0 &
-\frac{7}{720} + \frac{\sqrt{21}}{720} &
-\frac{2}{225} + \frac{4\sqrt{21}}{1575} &
-\frac{1}{720} &
\frac{1}{720} &
\frac{2}{225} - \frac{4\sqrt{21}}{1575} &
\frac{7}{720} - \frac{\sqrt{21}}{720} &
0
\end{array}
\right]^{\top}.
\end{align*}
}
Since the four vectors in $D_3$ only span a 3-dimensional space, $D_3$ can be written as $D_3 = \Span\{ \vector{d}_1, \vector{d}_2, \vector{d}_1 \odot \vector{c} \}$. Then, $D_4 = \Span\{ D_3, \vector{d}_2 \odot \vector{c}, \vector{d}_1 \odot \vector{c}^2, \vector{d}_2 \times^1 \matrix{A}, (\vector{d}_1 \odot \vector{c}) \times^1 \matrix{A} \}$, where
\resizebox{\textwidth}{!}{
\begin{minipage}{\textwidth}
{\thinmuskip=1mu \medmuskip=1mu \thickmuskip=2mu
\begin{align*}
\vector{d}_2 \times^1 \matrix{A} &=
\left[
\begin{array}{*{11}{>{\scriptstyle}c@{\hspace{2mm}}}}
0 &
0 &
\frac{3}{1400} + \frac{\sqrt{21}}{4200} &
\frac{\sqrt{21}}{1200} &
-\frac{\sqrt{21}}{1200} &
-\frac{13}{1800} - \frac{239\sqrt{21}}{264600} &
\frac{43}{2880} + \frac{29\sqrt{21}}{8640} &
-\frac{43}{2880} - \frac{29\sqrt{21}}{8640} &
\frac{8}{1575} + \frac{22\sqrt{21}}{33075} &
0 &
0
\end{array}
\right]^{\top}, \\
\vector{d}_2 \odot \vector{c} &=
\left[
\begin{array}{*{11}{>{\scriptstyle}c@{\hspace{2mm}}}}
0 &
0 &
0 &
0 &
-\frac{2}{135} + \frac{\sqrt{21}}{540} &
-\frac{5}{378} + \frac{\sqrt{21}}{210} &
-\frac{19}{4320} - \frac{11\sqrt{21}}{30240} &
\frac{19}{4320} + \frac{11\sqrt{21}}{30240} &
\frac{5}{378} - \frac{\sqrt{21}}{210} &
\frac{2}{135} - \frac{\sqrt{21}}{540} &
0
\end{array}
\right]^{\top}, \\
(\vector{d}_1 \odot \vector{c}) \times^1 \matrix{A} &=
\left[
\begin{array}{*{11}{>{\scriptstyle}c@{\hspace{2mm}}}}
0 &
0 &
\frac{1}{4200} - \frac{\sqrt{21}}{4200} &
-\frac{7}{2400} + \frac{\sqrt{21}}{2400} &
\frac{7}{2400} - \frac{\sqrt{21}}{2400} &
-\frac{3}{1400} - \frac{11\sqrt{21}}{88200} &
\frac{1}{240} + \frac{\sqrt{21}}{1008} &
-\frac{1}{240} - \frac{\sqrt{21}}{1008} &
\frac{1}{525} + \frac{4\sqrt{21}}{11025} &
0 &
0
\end{array}
\right]^{\top},
\\
\vector{d}_1 \odot \vector{c}^{2} &=
\left[
\begin{array}{*{11}{>{\scriptstyle}c@{\hspace{2mm}}}}
0 &
0 &
0 &
0 &
-\frac{1}{360} &
-\frac{1}{225} + \frac{2\sqrt{21}}{1575} &
-\frac{1}{1440} + \frac{\sqrt{21}}{10080} &
\frac{1}{1440} - \frac{\sqrt{21}}{10080} &
\frac{1}{225} - \frac{2\sqrt{21}}{1575} &
\frac{1}{360} &
0
\end{array}
\right]^{\top}.
\end{align*}
}
\vspace{0em}
\end{minipage}
}

The $Q$-type spaces are given by $Q_1 = \Span\{ \vector{q}_0 \} = \{ \vector{0} \}$, $Q_2 = \Span \{ \vector{q}_1 \}$, $Q_3 = \Span \{ \vector{q}_1, \vector{q}_2, \allowbreak \matrix{A} \vector{q}_1 \}$, where
{\thinmuskip=1mu \medmuskip=1mu \thickmuskip=2mu
\begin{align*}
\matrix{A} \vector{q}_1 &= 
\left[
\begin{array}{*{11}{>{\scriptstyle}c@{\hspace{2mm}}}}
0 &
0 &
-\frac{1}{32} &
\frac{1}{112} + \frac{3\sqrt{21}}{784} &
0 &
0 &
0 &
0 &
0 &
0 &
0
\end{array}
\right]^{\top}.
\end{align*}
}

One can check that they satisfy all conditions 1-5 of \Cref{thm:my_sufficiency_thm} till $Q_3$ and $D_4$, and the theorem guarantees that it has order 8.

\paragraph{Discussion}

All of the above examples satisfy the assumptions of both sufficiency theorems. It is nontrivial to determine whether there exist (explicit) RK schemes that do not satisfy all the conditions of either sufficiency theorem, yet still attain the desired order of accuracy. Although the two sufficiency theorems are clearly not equivalent from the perspective of their construction, it is also nontrivial—albeit seemingly easier—to construct an ERK scheme whose order can be established by one theorem but not by the other.

We give the following conjecture on the necessity of the conditions in the sufficiency theorem:



\begin{conjecture}
The set of conditions in \Cref{thm:my_sufficiency_thm_general} is not only sufficent but also necessary for a vector-valued explicit/implicit RK method to achieve order $p$.
\end{conjecture}


Proving or giving a counterexample of the conjecture will yield significant impact on the theoretical foundation of simplifying assumptions for explicit RK methods, and further innovate other ideas for construction of related schemes. Unforturenately, this goal is far from accomplished. We don't have a universal way to test all possible methods, and up to now all methods we have tested manually satisfy all conditions for the sufficiency theorem for some $m,n$, which successfully give the correct order.

\end{example}

\section{A construction example for $p=10$}

    We illustrate the stage arrangement for $p = 10$ in Figure~\ref{fig:stage-arrangement-p10}, where $m=4$, $n=5$, $N=6$, $l=6$, $s=22$. A $6$-point Lobatto quadrature rule with nodes $\{ x_i \}_{i=1}^6$ and weights $\{ w_i \}_{i=1}^6$ is used to construct the stage vector $\vector{c}$ and weights $\vector{b}$. The stage vector $\vector{c}$ is shown vertically on the left of the tableau and the weight vector $\vector{b}$ is shown horizontally at the bottom of the tableau. The stages $c_i$, with $i=2,\, \dots,\, 7$, are free parameters, and the weights satisfy
    \begin{align*}
        w_{44} + w_{54} = w_2, \\
        w_{33} + w_{43} + w_{53} = w_3, \\
        w_{22} + w_{32} + w_{42} + w_{52} = w_4, \\
        w_{11} + w_{21} + w_{31} + w_{41} + w_{51} = w_{5}.
    \end{align*}

    \begin{figure}[!ht]
        \centering

\begin{tikzpicture}[scale=0.9, every node/.style={font=\small}]
    \def\cellsize{0.6} 
    \def\n{22} 
    \def\l{7} 

    \foreach \i in {0,...,\n} {
        \draw[gray!50] (0, \i*\cellsize) -- (\n*\cellsize, \i*\cellsize);
        \draw[gray!50] (\i*\cellsize, 0) -- (\i*\cellsize, \n*\cellsize);
    }

    \draw[black] (-\cellsize,0) -- ({\n*\cellsize}, 0);
    \draw[black] (0, -1*\cellsize) -- (0, \n*\cellsize);

    \def\dvlabels{$x_5$,$x_4$,$x_3$,$x_2$,  $x_5$,$x_4$,$x_3$,$x_2$,  $x_5$,$x_4$,$x_3$,  $x_5$,$x_4$,  $x_5$,  $x_6=1$ }
    \def\qvlabels{\contour{white}{$c_1=0$},$c_2$,$c_3$,$c_4$,$c_5$,$c_6$,$c_7$}
    \def\dhlabels{$w_{51}$,$w_{52}$,$w_{53}$,$w_{54}$,  $w_{41}$,$w_{42}$,$w_{43}$,$w_{44}$,  $w_{31}$,$w_{32}$,$w_{33}$,  $w_{21}$,$w_{22}$,  $w_{11}$,  $w_6$ }
    \def\qhlabels{$w_1$,$0$,$0$,$0$,$0$,$0$,$0$}

    \foreach \lab [count=\i] in \dvlabels {
    \node [anchor=east] (dv\i) at ({-0.2*\cellsize},{(\n-\l-\i+0.5)*\cellsize}) {\lab};
    }
    \foreach \lab [count=\i] in \qvlabels {
    \node [anchor=east] (qv\i) at ({-0.2*\cellsize},{(\n-\i+0.5)*\cellsize}) {\lab};
    }
    
    \foreach \lab [count=\i] in \dhlabels {
    \node (dh\i) at ({(\l-0.5+\i)*\cellsize}, -0.5*\cellsize) {\lab};
    }
    \foreach \lab [count=\i] in \qhlabels {
    \node (qh\i) at ({(-0.5+\i)*\cellsize}, -0.5*\cellsize) {\lab};
    }

    
    \draw [decorate, decoration={brace, amplitude=5pt, mirror}] 
        ({-1.2*\cellsize}, {(\n-7+6)*\cellsize}) -- ({-1.2*\cellsize}, {(\n-2)*\cellsize})
        node [midway, left=6pt] {$Q$-group 1};
    \draw [decorate, decoration={brace, amplitude=5pt, mirror}] 
        ({-1.2*\cellsize}, {(\n-2)*\cellsize}) -- ({-1.2*\cellsize}, {(\n-4)*\cellsize})
        node [midway, left=6pt] {$Q$-group 2};
    \draw [decorate, decoration={brace, amplitude=5pt, mirror}] 
        ({-1.2*\cellsize}, {(\n-4)*\cellsize}) -- ({-1.2*\cellsize}, {(\n-7)*\cellsize})
        node [midway, left=6pt] {$Q$-group 3};

    \draw [decorate, decoration={brace, amplitude=5pt, mirror}] 
        ({-1.2*\cellsize}, {(\n-8+1)*\cellsize}) -- ({-1.2*\cellsize}, {(\n-11)*\cellsize})
        node [midway, left=6pt] {$D$-group 5};
    \draw [decorate, decoration={brace, amplitude=5pt, mirror}] 
        ({-1.2*\cellsize}, {(\n-12+1)*\cellsize}) -- ({-1.2*\cellsize}, {(\n-15)*\cellsize})
        node [midway, left=6pt] {$D$-group 4};
    \draw [decorate, decoration={brace, amplitude=5pt, mirror}] 
        ({-1.2*\cellsize}, {(\n-16+1)*\cellsize}) -- ({-1.2*\cellsize}, {(\n-18)*\cellsize})
        node [midway, left=6pt] {$D$-group 3};
    \draw [decorate, decoration={brace, amplitude=5pt, mirror}] 
        ({-1.2*\cellsize}, {(\n-19+1)*\cellsize}) -- ({-1.2*\cellsize}, {(\n-20)*\cellsize})
        node [midway, left=6pt] {$D$-group 2};
    \draw [decorate, decoration={brace, amplitude=5pt, mirror}] 
        ({-1.2*\cellsize}, {(\n-21+1)*\cellsize}) -- ({-1.2*\cellsize}, {(\n-21)*\cellsize})
        node [midway, left=6pt] {$D$-Group 1};

    \tikzset{
        myrect/.style={
            draw=black, thick, rounded corners=3pt,
            fill=white, opacity=0.8, inner sep=2pt
        },
        mypoly/.style={
            draw=black, thick, rounded corners=3pt,
            fill=white, opacity=0.8, inner sep=2pt
        },
        shaded/.style={
            draw=none,
            pattern=north east lines,
            pattern color=gray
        }
    }

    \draw[mypoly] (0, 0) -- (0, \n*\cellsize) -- (\l*\cellsize, {(\n-\l)*\cellsize}) -- (\l*\cellsize, 0) -- cycle;
    \node at ({3.5*\cellsize}, {(\n/2-3.5)*\cellsize}) {$Q$-region};
    \draw[mypoly] (\l*\cellsize, 0) -- (\l*\cellsize, {(\n-\l)*\cellsize}) -- (\n*\cellsize, 0) -- cycle;
    \node at ({(\l+4.5)*\cellsize}, {4.5*\cellsize}) {$D$-region};

    \foreach \i in {1,...,\n} {
        \node at (\i*\cellsize - \cellsize/2, \n*\cellsize - \i*\cellsize + \cellsize/2) {\contour{white}{$0$}};
    }

\end{tikzpicture}
        \caption{Stage arrangement for $p = 10$. The two regions, $D$-region and $Q$-region in the figure, correspond to the parameters in matrix $\matrix{A}$ that will be utilized to construct $D$-type spaces and $Q$-type spaces, respectively. }
        \label{fig:stage-arrangement-p10}
    \end{figure}

    Consider the construction of ERK methods of order $p=10$. The stage arrangement and quadrature initialization is displayed in Example 4.1 of \cite{he2026low} and \Cref{fig:stage-arrangement-p10}. According to the theoretical framework established in this section, achieving order $p$ requires constructing the $D$-type spaces up to level $D_{p/2}$. For $p=10$, this corresponds to $D_5$. We next illustrate how the corresponding \(D\)-system is constructed in this setting, demonstrating how the abstract theory translates into concrete computational procedures.

    %


    As shown in \Cref{fig:tikz_d_conditions_D5_p10}, the variable index sets for $D$-system with $p=10$ are:
    \begin{align*}
        I_1^D &= \big\{ (22,j) : 8 \leq j \leq 21 \big\}, \\
        I_2^D &= \big\{ (21,j) : 8 \leq j \leq 20 \big\}, \\
        I_3^D &= \big\{ (i,j) : i \in \{19,20\},\, 8 \leq j \leq 18 \big\}, \\
        I_4^D &= \big\{ (i,j) : i \in \{16,17,18\},\, 8 \leq j \leq 15 \big\}, \\
        I_5^D &= \big\{ (i,j) : i \in \{12,13,14,15\},\, 8 \leq j \leq 11 \big\}.
    \end{align*}
    The total number of variables in the $D$-system is $\sum_{k=1}^5 |I_k^D| = 14 + 13 + 2 \times 11 + 3 \times 8 + 4 \times 4 = 89$.
     As shown in \Cref{fig:tikz_d_conditions_D5_p10}, there remain 16 entries in the $D$-region of $\matrix{A}$ that are not determined by the \(D\)-system and can therefore be treated as free variables. In our construction, these entries are initialized to zero. 

    For each level $k = 1, \dots, 5$, the cluster space $W^D_k$ has dimension $k(k-1)/2$. The orthogonal complement $(\tilde{W}^D_k)^\perp \cap V_l^0$ has dimension $(s-\frac{k(k-1)}{2})-(l+2) = (22-\frac{k(k-1)}{2})-8 = 14-\frac{k(k-1)}{2}$. Consequently, the number of equations at level $k$ is:
    \begin{equation*}
        N_k^{\text{eq}} = 
        \begin{cases}
            14, & \text{for } k = 1, \\
            (k-1)\left(14-\frac{k(k-1)}{2}\right), & \text{for } k \geq 2.
        \end{cases}
    \end{equation*}
    This yields $N_1^{\text{eq}} = 14$, $N_2^{\text{eq}} = 13$, $N_3^{\text{eq}} = 22$, $N_4^{\text{eq}} = 24$, and $N_5^{\text{eq}} = 16$. The total number of equations is $89$, which is exactly the same as the number of variables.



    \begin{table}[!ht]
        \centering
        \caption{The complete set of linear constraints imposed by the construction of $D_5$ for tenth-order ERK methods. Each row represents an equation of the form $\vector{d}_{k-1} \cdot \vector{v} = 0$ or $(\vector{u} \mathop{\times^1} \matrix{A}) \cdot \vector{v} = 0$, where $\vector{u} \in W_{k-1}^D \cap (W_{k-2}^D)^\perp$ and $\vector{v} \in (\tilde{W}_k^D)^\perp \cap V_l^0$.}

\renewcommand{\arraystretch}{1.5}
\begin{tabular}{c|c}
\toprule
$k$ & equations  \\ \midrule
$1$ & $ \vector{d}_0 \cdot \vector{v} = 0, \quad \forall \vector{v} \in (\tilde{W}_1^D)^\perp \cap V_l^0 $ \\ \midrule
$2$ & \makecell{$\begin{aligned} \vector{d}_{1} \cdot \vector{v} &= 0, \quad \forall \vector{v} \in (\tilde{W}_2^D)^\perp \cap V_l^0 \\ (\vector{u} {\times^1} \matrix{A}) \cdot \vector{v} &= 0, \quad \forall \vector{u} \in W_{1}^D,\, \forall \vector{v} \in (\tilde{W}_2^D)^\perp \cap V_l^0 \end{aligned}$} \\ \midrule
$3$ & \makecell{$\begin{aligned} \vector{d}_{2} \cdot \vector{v} &= 0, \quad \forall \vector{v} \in (\tilde{W}_3^D)^\perp \cap V_l^0 \\ (\vector{u} {\times^1} \matrix{A}) \cdot \vector{v} &= 0, \quad \forall \vector{u} \in W_{2}^D \cap (W_{1}^D)^\perp,\, \forall \vector{v} \in (\tilde{W}_3^D)^\perp \cap V_l^0 \end{aligned} $} \\ \midrule
$4$ & \makecell{$\begin{aligned} \vector{d}_{3} \cdot \vector{v} &= 0, \quad \forall \vector{v} \in (\tilde{W}_4^D)^\perp \cap V_l^0 \\ (\vector{u} {\times^1} \matrix{A}) \cdot \vector{v} &= 0, \quad \forall \vector{u} \in W_{3}^D \cap (W_{2}^D)^\perp,\, \forall \vector{v} \in (\tilde{W}_4^D)^\perp \cap V_l^0 \end{aligned}$} \\ 
\bottomrule
\end{tabular}

        \label{tab:table_d_conditions_D5_p10}
    \end{table}

    \begin{figure}[!ht]
        \centering

\begin{tikzpicture}[scale=1.0, every node/.style={font=\small}]
    \def\cellsize{0.6} 
    \def\n{22} 
    \def\l{7} 

    \foreach \i in {0,...,\n} {
        \draw[gray!50] (0, \i*\cellsize) -- (\n*\cellsize, \i*\cellsize);
        \draw[gray!50] (\i*\cellsize, 0) -- (\i*\cellsize, \n*\cellsize);
    }

    \draw[black] (0,0) -- ({(\n+1)*\cellsize}, 0);
    \draw[black] (\n*\cellsize, -1*\cellsize) -- (\n*\cellsize, \n*\cellsize);

    \def\labels{$x_5$,$x_4$,$x_3$,$x_2$,  $x_5$,$x_4$,$x_3$,$x_2$,  $x_5$,$x_4$,$x_3$,  $x_5$,$x_4$,  $x_5$,  $x_6$ }

    \foreach \lab [count=\i] in \labels {
    \node (v\i) at ({(\n+0.5)*\cellsize},{(\n-\l-\i+0.5)*\cellsize}) {\lab};
    }
    
    \foreach \lab [count=\i] in \labels {
    \node (h\i) at ({(\l-0.5+\i)*\cellsize}, -0.5*\cellsize) {\lab};
    }

    \tikzset{
        myrect/.style={
            draw=black, thick, rounded corners=3pt,
            fill=white, opacity=0.8, inner sep=2pt
        },
        mypoly/.style={
            draw=black, ultra thick, dashed, rounded corners=3pt,
            fill=white, opacity=0.8, inner sep=2pt
        },
        shaded/.style={
            draw=none,
            pattern=north east lines,
            pattern color=gray
        }
    }

    \draw[mypoly] (\l*\cellsize, 0) -- (\l*\cellsize, {(\n-\l)*\cellsize}) -- (\n*\cellsize, 0) -- cycle;

    \foreach \i in {1,...,\n} {
        \node at (\i*\cellsize - \cellsize/2, \n*\cellsize - \i*\cellsize + \cellsize/2) {\contour{white}{$0$}};
    }

    \draw[myrect] (\l*\cellsize, 0) rectangle ++({(\n-1-\l)*\cellsize}, \cellsize);
    \node at ({\l*\cellsize + (\n-1-\l)/2*\cellsize}, 0.5*\cellsize) {$\vector{d}_0$};

    \begin{scope}
        \draw[myrect] ({(\l)*\cellsize}, \cellsize) rectangle ++({((\n-4)-(\l))*\cellsize}, \cellsize);
        \draw[shaded] ({(\l+2)*\cellsize}, \cellsize) rectangle ++({((\n-4)-(\l+2))*\cellsize}, \cellsize);
        \node at ({(\l)*\cellsize + ((\n-4)-(\l))/2*\cellsize}, 1.5*\cellsize) {\contour{white}{$\vector{\varepsilon}_{11}\matrix{A}$}};
    \end{scope}

    \begin{scope}
        \draw[myrect] ({(\l)*\cellsize}, 2*\cellsize) rectangle ++({((\n-7)-(\l))*\cellsize}, \cellsize);
        \draw[shaded] ({(\l+3)*\cellsize}, 2*\cellsize) rectangle ++({((\n-7)-(\l+3))*\cellsize}, \cellsize);
        \node at ({(\l)*\cellsize + ((\n-7)-(\l))/2*\cellsize}, 2.5*\cellsize) {\contour{white}{$\vector{\varepsilon}_{21} {\times^1} \matrix{A}$}};
    \end{scope}

    \begin{scope}
        \draw[myrect] ({(\l)*\cellsize}, 3*\cellsize) rectangle ++({((\n-7)-(\l))*\cellsize}, \cellsize);
        \draw[shaded] ({(\l+3)*\cellsize}, 3*\cellsize) rectangle ++({((\n-7)-(\l+3))*\cellsize}, \cellsize);
        \node at ({(\l)*\cellsize + ((\n-7)-(\l))/2*\cellsize}, 3.5*\cellsize) {\contour{white}{$\vector{\varepsilon}_{22} {\times^1} \matrix{A}$}};
    \end{scope}

    \begin{scope}
        \draw[myrect] ({(\l)*\cellsize}, 4*\cellsize) rectangle ++({((\n-11)-(\l))*\cellsize}, \cellsize);
        \draw[shaded] ({(\l+4)*\cellsize}, 4*\cellsize) rectangle ++({((\n-11)-(\l+4))*\cellsize}, \cellsize);
        \node at ({(\l)*\cellsize + ((\n-11)-(\l))/2*\cellsize}, 4.5*\cellsize) {\contour{white}{$\vector{\varepsilon}_{31} {\times^1} \matrix{A}$}};
    \end{scope}

    \begin{scope}
        \draw[myrect] ({(\l)*\cellsize}, 5*\cellsize) rectangle ++({((\n-11)-(\l))*\cellsize}, \cellsize);
        \draw[shaded] ({(\l+4)*\cellsize}, 5*\cellsize) rectangle ++({((\n-11)-(\l+4))*\cellsize}, \cellsize);
        \node at ({(\l)*\cellsize + ((\n-11)-(\l))/2*\cellsize}, 5.5*\cellsize) {\contour{white}{$\vector{\varepsilon}_{32} {\times^1} \matrix{A}$}};
    \end{scope}

    \begin{scope}
        \draw[myrect] ({(\l)*\cellsize}, 6*\cellsize) rectangle ++({((\n-11)-(\l))*\cellsize}, \cellsize);
        \draw[shaded] ({(\l+4)*\cellsize}, 6*\cellsize) rectangle ++({((\n-11)-(\l+4))*\cellsize}, \cellsize);
        \node at ({(\l)*\cellsize + ((\n-11)-(\l))/2*\cellsize}, 6.5*\cellsize) {\contour{white}{$\vector{\varepsilon}_{33} {\times^1} \matrix{A}$}};
    \end{scope}

    \begin{scope}
        \draw[myrect] ({(\n-4)*\cellsize}, \cellsize) rectangle ++(2*\cellsize, \cellsize);
        \node at ({(\n-4)*\cellsize + 1*\cellsize}, 1.5*\cellsize) {$\vector{d}_1$};

        \draw[myrect] ({(\n-7)*\cellsize}, 2*\cellsize) rectangle ++(3*\cellsize, \cellsize);
        \node at ({(\n-7)*\cellsize + 1.5*\cellsize}, 2.5*\cellsize) {$\vector{d}_1$};

        \draw[myrect] ({(\n-11)*\cellsize}, 4*\cellsize) rectangle ++(4*\cellsize, \cellsize);
        \node at ({(\n-11)*\cellsize + 2*\cellsize}, 4.5*\cellsize) {$\vector{d}_1$};

        \draw[myrect] ({(\l)*\cellsize}, 7*\cellsize) rectangle ++(4*\cellsize, \cellsize);
        \node at ({(\l)*\cellsize + 2*\cellsize}, 7.5*\cellsize) {$\vector{d}_1$};
    \end{scope}

    \begin{scope}
        \draw[myrect] ({(\n-7)*\cellsize}, 3*\cellsize) rectangle ++(3*\cellsize, \cellsize);
        \node at ({(\n-7)*\cellsize + 1.5*\cellsize}, 3.5*\cellsize) {$\vector{d}_2$};


        \draw[myrect] ({(\n-11)*\cellsize}, 5*\cellsize) rectangle ++(4*\cellsize, \cellsize);
        \node at ({(\n-11)*\cellsize + 2*\cellsize}, 5.5*\cellsize) {$\vector{d}_2$};


        \draw[myrect] ({(\l)*\cellsize}, 8*\cellsize) rectangle ++(4*\cellsize, \cellsize);
        \node at ({(\l)*\cellsize + 2*\cellsize}, 8.5*\cellsize) {$\vector{d}_2$};
    \end{scope}

    \begin{scope}
        \draw[myrect] ({(\n-11)*\cellsize}, 6*\cellsize) rectangle ++(4*\cellsize, \cellsize);
        \node at ({(\n-11)*\cellsize + 2*\cellsize}, 6.5*\cellsize) {$\vector{d}_3$};



        \draw[myrect] ({(\l)*\cellsize}, 9*\cellsize) rectangle ++(4*\cellsize, \cellsize);
        \node at ({(\l)*\cellsize + 2*\cellsize}, 9.5*\cellsize) {$\vector{d}_3$};
    \end{scope}

    \begin{scope}
        \draw[myrect] ({(\l)*\cellsize}, 10*\cellsize) rectangle ++(4*\cellsize, \cellsize);
        \node at ({(\l)*\cellsize + 2*\cellsize}, 10.5*\cellsize) {$\vector{d}_4$};
    \end{scope}

\end{tikzpicture}
        \caption{Spatial configuration of the $D$-system equations on the Butcher tableau $\matrix{A}$ for $p=10$. The matrix shows the distribution of different equation types: $\vector{d}_k$ regions and $\vector{\varepsilon}_{i,j} \mathop{\times^1} \matrix{A}$ regions. Blank areas represent the entries that are initialized to zero, and shaded areas indicate entries that become zero as a direct consequence of solving the constructed $D$-system equations. Dashed lines represent the boundary of the $D$-region.}
        \label{fig:tikz_d_conditions_D5_p10}
    \end{figure}

    \Cref{tab:table_d_conditions_D5_p10} provides a comprehensive listing of all constraints imposed by the construction of $D_5$. Each entry in the table corresponds to a specific linear equation of the form $\vector{d}_{k-1} \cdot \vector{v} = 0$ or $(\vector{u} \mathop{\times^1} \matrix{A}) \cdot \vector{v} = 0$, where $\vector{u}$ is a basis vector of $W_{k-1}^D \cap (W_{k-2}^D)^\perp$ and $\vector{v}$ is a basis vector of $(\tilde{W}_k^D)^\perp \cap V_l^0$.

    \Cref{fig:tikz_d_conditions_D5_p10} visualizes the spatial distribution of equations on the Butcher tableau $\matrix{A}$. The figure illustrates how different types of constraints ($\vector{d}_k$ residuals and $\vector{\varepsilon}_{i,j}\matrix{A}$ products) occupy distinct regions of the matrix when the linear system is assembled (we refer our readers to appendix if you are interested). The shaded regions indicate entries that are zero by construction, while the labeled rectangles show the blocks affected by each type of equation.

    Consider the construction of ERK methods of order $p=10$. The stage arrangement and quadrature initialization is displayed in Example 4.1 of \cite{he2026low} and \Cref{fig:stage-arrangement-p10}.
    The construction of ERK method of order $p=10$ in the $D$-region of table $\matrix{A}$ is given in
    $D$-systems. For $p=10$, we have $m=4$. We next illustrate how the
    corresponding $Q$-system is constructed in this setting, demonstrating how the abstract theory
    translates into concrete computational procedures.
    


    Applying the general formulation from $Q$-system, the variable index sets for $p=10$ are:
    \begin{align*}
        I_1^Q &= \big\{ (i,1) : 2 \leq i \leq 22  \big\}, \\
        I_2^Q &= \big\{ (i,2) : 3 \leq i \leq 22 \big\}, \\
        I_3^Q &= \big\{ (i,j) : 5 \leq i \leq 22,\, 3 \leq j \leq 4 \big\}, \\
        I_4^Q &= \big\{ (i,j) : 8 \leq i \leq 22,\, 5 \leq j \leq 7 \big\}.
    \end{align*}
    The total number of variables in the $Q$-system is $\sum_{k=1}^4 |I_k^Q| = 21 + 20 + 2 \times 18 + 3 \times 15 = 122$. As shown in \Cref{fig:tikz_q_conditions_Q4_p10}, there remain 4 entries in the $Q$-region of $\matrix{A} $ that are not determined by the Q-system and can therefore be treated as free variables. In our construction, these entries are initialized to zero.

    For each level $k = 1, \dots, 4$, the test space $(\tilde{W}_k^Q)^\perp$ has dimension $(s-\frac{k(k-1)}{2}-1) = 21-\frac{k(k-1)}{2}$. Consequently, the number of equations at level $k$ is:
    \begin{equation*}
        M_k^{\text{eq}} = 
        \begin{cases}
            21, & \text{for } k = 1, \\
            (k-1)\left(21-\frac{k(k-1)}{2}\right), & \text{for } k \geq 2,
        \end{cases}
    \end{equation*}
    yielding $M_1^{\text{eq}} = 21$, $M_2^{\text{eq}} = 20$, $M_3^{\text{eq}} = 36$, and $M_4^{\text{eq}} = 45$. The total number of equations is $\sum_{k=1}^4 M_k^{\text{eq}} = 122$, which is exactly the same as the number of variables.

    \Cref{tab:table_q_conditions_Q4_p10} provides a comprehensive listing of all constraints imposed by the construction of $Q_4$. Each entry in the table corresponds to a specific linear equation of the form $Q$-system. \Cref{fig:tikz_q_conditions_Q4_p10} visualizes the spatial distribution of equations on the Butcher tableau $\matrix{A}$ for $p=10$. The figure illustrates how different types of constraints ($\vector{q}_k$ residuals and $\matrix{A} \vector{e}_{j}$ products) occupy distinct regions of the matrix when the linear system is assembled. The $Q$-region is enclosed by the dashed line. Blank regions within the $Q$-region are entries initialized to zero, shaded regions indicate entries that are zero as a result of solving the $Q$-system equations, and labeled rectangles show the blocks affected by each type of equation. This design allows the equations to be assembled into a block diagonal linear system. The dashed lines clearly separate the $Q$-region from the $D$-region, highlighting the modular structure of the overall construction.

    \begin{table}[!ht]
        \centering
        \caption{The complete set of linear constraints imposed by the construction of $Q_4$ for tenth-order ERK methods.}



\renewcommand{\arraystretch}{1.5}
\begin{tabular}{c|c}
\toprule
$k$ & equations  \\ \midrule
$1$ & $ \vector{q}_0 \cdot \vector{v} = 0, \quad \forall \vector{v} \in (\tilde{W}_1^Q)^\perp $ \\ \midrule
$2$ & \makecell{$\begin{aligned} \vector{q}_{1} \cdot \vector{v} &= 0, \quad \forall \vector{v} \in (\tilde{W}_2^Q)^\perp \\ (\matrix{A} \vector{u}) \cdot \vector{v} &= 0, \quad \forall \vector{v} \in (\tilde{W}_2^Q)^\perp, \forall \vector{u} \in W_{1}^Q \end{aligned}$} \\ \midrule
$3$ & \makecell{$\begin{aligned} \vector{q}_{2} \cdot \vector{v} &= 0, \quad \forall \vector{v} \in (\tilde{W}_3^Q)^\perp \\ (\matrix{A} \vector{u}) \cdot \vector{v} &= 0, \quad \forall \vector{v} \in (\tilde{W}_3^Q)^\perp, \forall \vector{u} \in W_{2}^Q \cap (W_{1}^Q)^\perp \end{aligned} $} \\ \midrule
$4$ & \makecell{$\begin{aligned} \vector{q}_{3} \cdot \vector{v} &= 0, \quad \forall \vector{v} \in (\tilde{W}_4^Q)^\perp \\ (\matrix{A} \vector{u}) \cdot \vector{v} &= 0, \quad \forall \vector{v} \in (\tilde{W}_4^Q)^\perp, \forall \vector{u} \in W_{3}^Q \cap (W_{2}^Q)^\perp \end{aligned}$} \\ 
\bottomrule
\end{tabular}

        \label{tab:table_q_conditions_Q4_p10}
    \end{table}

    \begin{figure}[!ht]
        \centering

\begin{tikzpicture}[scale=1.0, every node/.style={font=\small}]
    \def\cellsize{0.6} 
    \def\n{22} 
    \def\l{7} 

    \foreach \i in {0,...,\n} {
        \draw[gray!50] (0, \i*\cellsize) -- (\n*\cellsize, \i*\cellsize);
        \draw[gray!50] (\i*\cellsize, 0) -- (\i*\cellsize, \n*\cellsize);
    }

    \tikzset{
        myrect/.style={
            draw=black, thick, rounded corners=3pt,
            fill=white, opacity=0.8, inner sep=2pt
        },
        mypoly/.style={
            draw=black, ultra thick, dashed, rounded corners=3pt,
            fill=white, opacity=0.8, inner sep=2pt
        },
        shaded/.style={
            draw=none,
            pattern=north east lines,
            pattern color=gray
        }
    }

    \draw[mypoly] (0, 0) -- (0, \n*\cellsize) -- (\l*\cellsize, {(\n-\l)*\cellsize}) -- (\l*\cellsize, 0) -- cycle;
    \node at ({3.5*\cellsize}, {(\n/2-3.5)*\cellsize}) {$Q$-region};

    \foreach \i in {1,...,\n} {
        \node at (\i*\cellsize - \cellsize/2, \n*\cellsize - \i*\cellsize + \cellsize/2) {\contour{white}{0}};
    }

    \draw[myrect] (0, 0) rectangle ++(\cellsize, {(\n-1)*\cellsize});
    \node at (0.5*\cellsize, {(\n-1)/2*\cellsize}) {$\vector{q}_0$};

    \draw[myrect] (\cellsize, \n*\cellsize - 4*\cellsize) rectangle ++(\cellsize, 2*\cellsize);
    \node at (1.5*\cellsize, \n*\cellsize - 3*\cellsize) {$\vector{q}_1$};

    \draw[myrect] (\cellsize, 0) rectangle ++(\cellsize, {(\n-4)*\cellsize});
    \draw[shaded] (\cellsize, 0) rectangle ++(\cellsize, {(\n-4)*\cellsize});
    \node at (1.5*\cellsize, {(\n-4)/2*\cellsize}) {\contour{white}{$\matrix{A}\vector{e}_2$}};

    \draw[myrect] (2*\cellsize, {(\n-7)*\cellsize}) rectangle ++(\cellsize, {3*\cellsize});
    \node at (2.5*\cellsize, {(\n-7+1.5)*\cellsize}) {$\vector{q}_1$};

    \draw[myrect] (2*\cellsize, 0) rectangle ++(\cellsize, {(\n-7)*\cellsize});
    \draw[shaded] (2*\cellsize, 0) rectangle ++(\cellsize, {(\n-7)*\cellsize});
    \node at (2.5*\cellsize, {(\n-7)/2*\cellsize}) {\contour{white}{$\matrix{A}\vector{e}_3$}};

    \draw[myrect] (3*\cellsize, {(\n-7)*\cellsize}) rectangle ++(\cellsize, {3*\cellsize});
    \node at (3.5*\cellsize, {(\n-7+1.5)*\cellsize}) {$\vector{q}_2$};

    \draw[myrect] (3*\cellsize, 0) rectangle ++(\cellsize, {(\n-7)*\cellsize});
    \draw[shaded] (3*\cellsize, 0) rectangle ++(\cellsize, {(\n-7)*\cellsize});
    \node at (3.5*\cellsize, {(\n-7)/2*\cellsize}) {\contour{white}{$\matrix{A}\vector{e}_4$}};

    \draw[myrect] (4*\cellsize, 0) rectangle ++(\cellsize, {(\n-7)*\cellsize});
    \node at (4.5*\cellsize, {(\n-7)/2*\cellsize}) {$\vector{q}_1$};

    \draw[myrect] (5*\cellsize, 0) rectangle ++(\cellsize, {(\n-7)*\cellsize});
    \node at (5.5*\cellsize, {(\n-7)/2*\cellsize}) {$\vector{q}_2$};

    \draw[myrect] (6*\cellsize, 0) rectangle ++(\cellsize, {(\n-7)*\cellsize});
    \node at (6.5*\cellsize, {(\n-7)/2*\cellsize}) {$\vector{q}_3$};

\end{tikzpicture}
        \caption{Spatial configuration of the $Q$-system equations on the Butcher tableau $\matrix{A}$ for $p=10$. The matrix shows the distribution of different equation types: $\vector{q}_k$ regions and $\matrix{A} \vector{e}_{j}$ regions. Blank areas in the $Q$-region represent the entries that are initialized to zero, and shaded areas indicate entries that become zero as a direct consequence of solving the constructed $Q$-system equations. Dashed lines represent the boundary of the $Q$-region.}
        \label{fig:tikz_q_conditions_Q4_p10}
    \end{figure}

\section{Assembly of the linear systems}

\subsection{Assembly of the $D$-system}

We give an example of a linear system assembly, when $p = 10$.

\paragraph{Assembly of the $D$-system}

The variables and equations in the $D$-system are introduced in the example with $p=10$. To assemble them into a structured linear system, we adopt a particular ordering of the variables. For elements $(i,j)$ in the variable index set $I^D$, we define a partial ordering $(i,j) > (i',j')$ if and only if $j > j'$ or $j = j'$ and $i > i'$. The variables are then ordered as $(s,s-1)$, $(s,s-2)$, $(s-1,s-2)$, $(s,s-3)$, $(s-1,s-3)$, $(s,s-4)$, $(s-1,s-4)$, $(s-2,s-4)$, $(s-3,s-4)$, $\dots$. As shown in \Cref{fig:assembly_D_10}, we array them explicitly column by column. The RK method to be constructed has $s = 22$ stages. The equations are listed in \Cref{tab:assembly_D_10}.

\begin{figure}[h!]
    \centering
    \input{tikz/tikz_d_variable_ordering_D5_p10_QDWithClustersMinimalV1.tex}
    \caption{The ordering of the variables in the $D$-system.}
    \label{fig:assembly_D_10}
\end{figure}

\begin{table}[!h]
    \centering
    \caption{The assembly of the $D$-system for $p = 10$.}
    \begin{tabular}[t]{cc}
        \toprule
        Variables & Equations \\
        \midrule
        $a_{22, 21}$ & $\vector{d}_0 \cdot \vector{e}_{21} = 0$ \\ \midrule
        $a_{22, 20}$ & $\vector{d}_0 \cdot \vector{e}_{20} = 0$ \\ 
        $a_{21, 20}$ & $\vector{d}_1 \cdot \vector{e}_{20} = 0$ \\ \midrule
        $a_{22, 19}$ & $\vector{d}_0 \cdot \vector{e}_{19} = 0$ \\ 
        $a_{21, 19}$ & $\vector{d}_1 \cdot \vector{e}_{19} = 0$ \\ \midrule
        $a_{22, 18}$ & $\vector{d}_0 \cdot \vector{e}_{18} = 0$ \\ 
        $a_{21, 18}$ & $\vector{\varepsilon}_{1,1} \matrix{A} \cdot \vector{e}_{18} = 0$ \\ 
        $a_{20, 18}$ & $\vector{d}_1 \cdot \vector{e}_{18} = 0$ \\ 
        $a_{19, 18}$ & $\vector{d}_2 \cdot \vector{e}_{18} = 0$ \\ \midrule
        $a_{22, 17}$ & $\vector{d}_0 \cdot \vector{e}_{17} = 0$ \\ 
        $a_{21, 17}$ & $\vector{\varepsilon}_{1,1} \matrix{A} \cdot \vector{e}_{17} = 0$ \\ 
        $a_{20, 17}$ & $\vector{d}_1 \cdot \vector{e}_{17} = 0$ \\ 
        $a_{19, 17}$ & $\vector{d}_2 \cdot \vector{e}_{17} = 0$ \\ \midrule
        $a_{22, 16}$ & $\vector{d}_0 \cdot \vector{e}_{16} = 0$ \\ 
        $a_{21, 16}$ & $\vector{\varepsilon}_{1,1} \matrix{A} \cdot \vector{e}_{16} = 0$ \\ 
        $a_{20, 16}$ & $\vector{d}_1 \cdot \vector{e}_{16} = 0$ \\ 
        $a_{19, 16}$ & $\vector{d}_2 \cdot \vector{e}_{16} = 0$ \\ \midrule
        $a_{22, 15}$ & $\vector{d}_0 \cdot \vector{e}_{15} = 0$ \\
        $a_{21, 15}$ & $\vector{\varepsilon}_{1,1} \matrix{A} \cdot \vector{e}_{15} = 0$ \\ 
        $a_{20, 15}$ & $\vector{\varepsilon}_{2,1} \matrix{A} \cdot \vector{e}_{15} = 0$ \\ 
        $a_{19, 15}$ & $\vector{\varepsilon}_{2,2} \matrix{A} \cdot \vector{e}_{15} = 0$ \\ 
        $a_{18, 15}$ & $\vector{d}_1 \cdot \vector{e}_{15} = 0$ \\ 
        $a_{17, 15}$ & $\vector{d}_2 \cdot \vector{e}_{15} = 0$ \\ 
        $a_{16, 15}$ & $\vector{d}_3 \cdot \vector{e}_{15} = 0$ \\ \midrule
        $a_{22, 14}$ & $\vector{d}_0 \cdot \vector{e}_{14} = 0$ \\
        $a_{21, 14}$ & $\vector{\varepsilon}_{1,1} \matrix{A} \cdot \vector{e}_{14} = 0$ \\ 
        $a_{20, 14}$ & $\vector{\varepsilon}_{2,1} \matrix{A} \cdot \vector{e}_{14} = 0$ \\ 
        $a_{19, 14}$ & $\vector{\varepsilon}_{2,2} \matrix{A} \cdot \vector{e}_{14} = 0$ \\ 
        $a_{18, 14}$ & $\vector{d}_1 \cdot \vector{e}_{14} = 0$ \\ 
        $a_{17, 14}$ & $\vector{d}_2 \cdot \vector{e}_{14} = 0$ \\ 
        $a_{16, 14}$ & $\vector{d}_3 \cdot \vector{e}_{14} = 0$ \\             
        \bottomrule
    \end{tabular}
    \quad
    \begin{tabular}[t]{cc}
        \toprule
        Variables & Equations \\
        \midrule
        $a_{22, 13}$ & $\vector{d}_0 \cdot \vector{e}_{13} = 0$ \\
        $a_{21, 13}$ & $\vector{\varepsilon}_{1,1} \matrix{A} \cdot \vector{e}_{13} = 0$ \\ 
        $a_{20, 13}$ & $\vector{\varepsilon}_{2,1} \matrix{A} \cdot \vector{e}_{13} = 0$ \\ 
        $a_{19, 13}$ & $\vector{\varepsilon}_{2,2} \matrix{A} \cdot \vector{e}_{13} = 0$ \\ 
        $a_{18, 13}$ & $\vector{d}_1 \cdot \vector{e}_{13} = 0$ \\ 
        $a_{17, 13}$ & $\vector{d}_2 \cdot \vector{e}_{13} = 0$ \\ 
        $a_{16, 13}$ & $\vector{d}_3 \cdot \vector{e}_{13} = 0$ \\ \midrule
        $a_{22, 12}$ & $\vector{d}_0 \cdot \vector{e}_{12} = 0$ \\
        $a_{21, 12}$ & $\vector{\varepsilon}_{1,1} \matrix{A} \cdot \vector{e}_{12} = 0$ \\ 
        $a_{20, 12}$ & $\vector{\varepsilon}_{2,1} \matrix{A} \cdot \vector{e}_{12} = 0$ \\ 
        $a_{19, 12}$ & $\vector{\varepsilon}_{2,2} \matrix{A} \cdot \vector{e}_{12} = 0$ \\ 
        $a_{18, 12}$ & $\vector{d}_1 \cdot \vector{e}_{12} = 0$ \\ 
        $a_{17, 12}$ & $\vector{d}_2 \cdot \vector{e}_{12} = 0$ \\ 
        $a_{16, 12}$ & $\vector{d}_3 \cdot \vector{e}_{12} = 0$ \\ \midrule
        $a_{22, 11}$ & $\vector{d}_0 \cdot \vector{\mu}_{4} = 0$ \\
        $a_{21, 11}$ & $\vector{\varepsilon}_{1,1} \matrix{A} \cdot \vector{\mu}_{4} = 0$ \\
        $a_{20, 11}$ & $\vector{\varepsilon}_{2,1} \matrix{A} \cdot \vector{\mu}_{4} = 0$ \\
        $a_{19, 11}$ & $\vector{\varepsilon}_{2,2} \matrix{A} \cdot \vector{\mu}_{4} = 0$ \\
        $a_{18, 11}$ & $\vector{\varepsilon}_{3,1} \matrix{A} \cdot \vector{\mu}_{4} = 0$ \\
        $a_{17, 11}$ & $\vector{\varepsilon}_{3,2} \matrix{A} \cdot \vector{\mu}_{4} = 0$ \\
        $a_{16, 11}$ & $\vector{\varepsilon}_{3,3} \matrix{A} \cdot \vector{\mu}_{4} = 0$ \\
        $a_{15, 11}$ & $\vector{d}_1 \cdot \vector{\mu}_{4} = 0$ \\
        $a_{14, 11}$ & $\vector{d}_2 \cdot \vector{\mu}_{4} = 0$ \\
        $a_{13, 11}$ & $\vector{d}_3 \cdot \vector{\mu}_{4} = 0$ \\
        $a_{12, 11}$ & $\vector{d}_4 \cdot \vector{\mu}_{4} = 0$ \\ 
        \bottomrule
    \end{tabular}
    \quad
    \begin{tabular}[t]{cc}
        \toprule
        Variables & Equations \\
        \midrule
        $a_{22, 10}$ & $\vector{d}_0 \cdot \vector{\mu}_{3} = 0$ \\
        $a_{21, 10}$ & $\vector{\varepsilon}_{1,1} \matrix{A} \cdot \vector{\mu}_{3} = 0$ \\
        $a_{20, 10}$ & $\vector{\varepsilon}_{2,1} \matrix{A} \cdot \vector{\mu}_{3} = 0$ \\
        $a_{19, 10}$ & $\vector{\varepsilon}_{2,2} \matrix{A} \cdot \vector{\mu}_{3} = 0$ \\
        $a_{18, 10}$ & $\vector{\varepsilon}_{3,1} \matrix{A} \cdot \vector{\mu}_{3} = 0$ \\
        $a_{17, 10}$ & $\vector{\varepsilon}_{3,2} \matrix{A} \cdot \vector{\mu}_{3} = 0$ \\
        $a_{16, 10}$ & $\vector{\varepsilon}_{3,3} \matrix{A} \cdot \vector{\mu}_{3} = 0$ \\
        $a_{15, 10}$ & $\vector{d}_1 \cdot \vector{\mu}_{3} = 0$ \\
        $a_{14, 10}$ & $\vector{d}_2 \cdot \vector{\mu}_{3} = 0$ \\
        $a_{13, 10}$ & $\vector{d}_3 \cdot \vector{\mu}_{3} = 0$ \\
        $a_{12, 10}$ & $\vector{d}_4 \cdot \vector{\mu}_{3} = 0$ \\ \midrule
        $a_{22, 9}$ & $\vector{d}_0 \cdot \vector{\mu}_{2} = 0$ \\
        $a_{21, 9}$ & $\vector{\varepsilon}_{1,1} \matrix{A} \cdot \vector{\mu}_{2} = 0$ \\
        $a_{20, 9}$ & $\vector{\varepsilon}_{2,1} \matrix{A} \cdot \vector{\mu}_{2} = 0$ \\
        $a_{19, 9}$ & $\vector{\varepsilon}_{2,2} \matrix{A} \cdot \vector{\mu}_{2} = 0$ \\
        $a_{18, 9}$ & $\vector{\varepsilon}_{3,1} \matrix{A} \cdot \vector{\mu}_{2} = 0$ \\
        $a_{17, 9}$ & $\vector{\varepsilon}_{3,2} \matrix{A} \cdot \vector{\mu}_{2} = 0$ \\
        $a_{16, 9}$ & $\vector{\varepsilon}_{3,3} \matrix{A} \cdot \vector{\mu}_{2} = 0$ \\
        $a_{15, 9}$ & $\vector{d}_1 \cdot \vector{\mu}_{2} = 0$ \\
        $a_{14, 9}$ & $\vector{d}_2 \cdot \vector{\mu}_{2} = 0$ \\
        $a_{13, 9}$ & $\vector{d}_3 \cdot \vector{\mu}_{2} = 0$ \\
        $a_{12, 9}$ & $\vector{d}_4 \cdot \vector{\mu}_{2} = 0$ \\ \midrule
        $a_{22, 8}$ & $\vector{d}_0 \cdot \vector{\mu}_{1} = 0$ \\
        $a_{21, 8}$ & $\vector{\varepsilon}_{1,1} \matrix{A} \cdot \vector{\mu}_{1} = 0$ \\
        $a_{20, 8}$ & $\vector{\varepsilon}_{2,1} \matrix{A} \cdot \vector{\mu}_{1} = 0$ \\
        $a_{19, 8}$ & $\vector{\varepsilon}_{2,2} \matrix{A} \cdot \vector{\mu}_{1} = 0$ \\
        $a_{18, 8}$ & $\vector{\varepsilon}_{3,1} \matrix{A} \cdot \vector{\mu}_{1} = 0$ \\
        $a_{17, 8}$ & $\vector{\varepsilon}_{3,2} \matrix{A} \cdot \vector{\mu}_{1} = 0$ \\
        $a_{16, 8}$ & $\vector{\varepsilon}_{3,3} \matrix{A} \cdot \vector{\mu}_{1} = 0$ \\
        $a_{15, 8}$ & $\vector{d}_1 \cdot \vector{\mu}_{1} = 0$ \\
        $a_{14, 8}$ & $\vector{d}_2 \cdot \vector{\mu}_{1} = 0$ \\
        $a_{13, 8}$ & $\vector{d}_3 \cdot \vector{\mu}_{1} = 0$ \\
        $a_{12, 8}$ & $\vector{d}_4 \cdot \vector{\mu}_{1} = 0$ \\ \bottomrule
    \end{tabular}
    \label{tab:assembly_D_10}
\end{table}

Using the definition $\vector{d}_n = (\vector{b} \cdot \vector{c}^{\odot n}) \times^1 \matrix{A} - \frac{1}{n+1} \vector{b} \odot (\vector{1} - \vector{c}^{\odot (n+1)})$ and the definition of basis vectors $\vector{e}_i$ and $\vector{\varepsilon}_{i,j}$, the system can be assembled as follows:

\begin{equation}
    \matrix{M}_D = 
    \left[
    \begin{array}{*{14}{>{\scriptstyle}c@{\hspace{1mm}}}}
        \matrix{L}_{1,1}^D & 0 & 0 & 0 & 0 & 0 & 0 & 0 & 0 & 0 & 0 & 0 & 0 & 0 \\
        0 & \matrix{L}_{2,2}^D & 0 & 0 & 0 & 0 & 0 & 0 & 0 & 0 & 0 & 0 & 0 & 0 \\
        0 & 0 & \matrix{L}_{3,3}^D & 0 & 0 & 0 & 0 & 0 & 0 & 0 & 0 & 0 & 0 & 0 \\
        0 & 0 & 0 & \matrix{L}_{4,4}^D & 0 & 0 & 0 & 0 & 0 & 0 & 0 & 0 & 0 & 0 \\
        0 & 0 & 0 & 0 & \matrix{L}_{5,5}^D & 0 & 0 & 0 & 0 & 0 & 0 & 0 & 0 & 0 \\
        0 & 0 & 0 & 0 & 0 & \matrix{L}_{6,6}^D & 0 & 0 & 0 & 0 & 0 & 0 & 0 & 0 \\
        0 & 0 & 0 & 0 & 0 & 0 & \matrix{L}_{7,7}^D & 0 & 0 & 0 & 0 & 0 & 0 & 0 \\
        0 & 0 & 0 & 0 & 0 & 0 & 0 & \matrix{L}_{8,8}^D & 0 & 0 & 0 & 0 & 0 & 0 \\
        0 & 0 & 0 & 0 & 0 & 0 & 0 & 0 & \matrix{L}_{9,9}^D & 0 & 0 & 0 & 0 & 0 \\
        0 & 0 & 0 & 0 & 0 & 0 & 0 & 0 & 0 & \matrix{L}_{10,10}^D & 0 & 0 & 0 & 0 \\
        0 & 0 & 0 & 0 & 0 & 0 & \matrix{L}_{11,7}^D & 0 & 0 & 0 & \matrix{L}_{11,11}^D & 0 & 0 & 0 \\
        0 & 0 & 0 & \matrix{L}_{12,4}^D & 0 & 0 & 0 & \matrix{L}_{12,8}^D & 0 & 0 & 0 & \matrix{L}_{12,12}^D & 0 & 0 \\
        0 & \matrix{L}_{13,2}^D & 0 & 0 & \matrix{L}_{13,5}^D & 0 & 0 & 0 & \matrix{L}_{13,9}^D & 0 & 0 & 0 & \matrix{L}_{13,13}^D & 0 \\
        \matrix{L}_{14,1}^D & 0 & \matrix{L}_{14,3}^D & 0 & 0 & \matrix{L}_{14,6}^D & 0 & 0 & 0 & \matrix{L}_{14,10}^D & 0 & 0 & 0 & \matrix{L}_{14,14}^D 
    \end{array}
    \right]
\end{equation}
where 
\begin{equation}
    \matrix{L}_{1,1}^D = 
    \left[
    \begin{array}{*{1}{>{\textstyle}c}}
        b_{22} \\
    \end{array}
    \right], \quad
    \matrix{L}_{2,2}^D = \matrix{L}_{3,3}^D =
    \left[
    \begin{array}{*{2}{>{\textstyle}c}}
        b_{22} & b_{21} \\
        b_{22} c_{22} & b_{21} c_{21} \\
    \end{array}
    \right],
\end{equation}
\begin{equation}
    \matrix{L}_{4,4}^D = \matrix{L}_{5,5}^D = \matrix{L}_{6,6}^D = \left[
    \begin{array}{*{4}{>{\textstyle}c}}
        b_{22} & b_{21} & b_{20} & b_{19} \\
        0 & 1 & 0 & 0 \\
        b_{22} c_{22} & b_{21} c_{21} & b_{20} c_{20} & b_{19} c_{19} \\
        b_{22} c_{22}^2 & b_{21} c_{21}^2 & b_{20} c_{20}^2 & b_{19} c_{19}^2
    \end{array}
    \right],
\end{equation}
\begin{equation}
    \matrix{L}_{7,7}^D = \matrix{L}_{8,8}^D = \matrix{L}_{9,9}^D = \matrix{L}_{10,10}^D = \left[
    \begin{array}{*{7}{>{\textstyle}c}}
        b_{22} & b_{21} & b_{20} & b_{19} & b_{18} & b_{17} & b_{16} \\
        0 & 1 & 0 & 0 & 0 & 0 & 0 \\
        0 & 0 & 1 & 0 & 0 & 0 & 0 \\
        0 & 0 & 0 & 1 & 0 & 0 & 0 \\
        b_{22} c_{22} & b_{21} c_{21} & b_{20} c_{20} & b_{19} c_{19} & b_{18} c_{18} & b_{17} c_{17} & b_{16} c_{16} \\
        b_{22} c_{22}^2 & b_{21} c_{21}^2 & b_{20} c_{20}^2 & b_{19} c_{19}^2 & b_{18} c_{18}^2 & b_{17} c_{17}^2 & b_{16} c_{16}^2 \\
        b_{22} c_{22}^3 & b_{21} c_{21}^3 & b_{20} c_{20}^3 & b_{19} c_{19}^3 & b_{18} c_{18}^3 & b_{17} c_{17}^3 & b_{16} c_{16}^3
    \end{array}
    \right],
\end{equation}
\begin{equation}
    \begin{split}
        & \matrix{L}_{11,11}^D = \matrix{L}_{12,12}^D = \matrix{L}_{13,13}^D = \matrix{L}_{14,14}^D \\
        &= \left[
        \begin{array}{*{11}{>{\textstyle}c@{\hspace{2mm}}}}
            b_{22} & b_{21} & b_{20} & b_{19} & b_{18} & b_{17} & b_{16} & b_{15} & b_{14} & b_{13} & b_{12} \\
            0 & 1 & 0 & 0 & 0 & 0 & 0 & 0 & 0 & 0 & 0 \\
            0 & 0 & 1 & 0 & 0 & 0 & 0 & 0 & 0 & 0 & 0 \\
            0 & 0 & 0 & 1 & 0 & 0 & 0 & 0 & 0 & 0 & 0 \\
            0 & 0 & 0 & 0 & 1 & 0 & 0 & 0 & 0 & 0 & 0 \\
            0 & 0 & 0 & 0 & 0 & 1 & 0 & 0 & 0 & 0 & 0 \\
            0 & 0 & 0 & 0 & 0 & 0 & 1 & 0 & 0 & 0 & 0 \\
            b_{22} c_{22} & b_{21} c_{21} & b_{20} c_{20} & b_{19} c_{19} & b_{18} c_{18} & b_{17} c_{17} & b_{16} c_{16} & b_{15} c_{15} & b_{14} c_{14} & b_{13} c_{13} & b_{12} c_{12} \\
            b_{22} c_{22}^2 & b_{21} c_{21}^2 & b_{20} c_{20}^2 & b_{19} c_{19}^2 & b_{18} c_{18}^2 & b_{17} c_{17}^2 & b_{16} c_{16}^2 & b_{15} c_{15}^2 & b_{14} c_{14}^2 & b_{13} c_{13}^2 & b_{12} c_{12}^2 \\
            b_{22} c_{22}^3 & b_{21} c_{21}^3 & b_{20} c_{20}^3 & b_{19} c_{19}^3 & b_{18} c_{18}^3 & b_{17} c_{17}^3 & b_{16} c_{16}^3 & b_{15} c_{15}^3 & b_{14} c_{14}^3 & b_{13} c_{13}^3 & b_{12} c_{12}^3
        \end{array}
        \right],
    \end{split}
\end{equation}
\begin{equation}
    \matrix{L}_{11,7}^D = \matrix{L}_{12,8}^D = \matrix{L}_{13,9}^D = \matrix{L}_{14,10}^D
    = \left[
    \begin{array}{*{7}{>{\textstyle}c }}
        b_{22} & b_{21} & b_{20} & b_{19} & b_{18} & b_{17} & b_{16} \\
        0 & 1 & 0 & 0 & 0 & 0 & 0 \\
        0 & 0 & 1 & 0 & 0 & 0 & 0 \\
        0 & 0 & 0 & 1 & 0 & 0 & 0 \\
        0 & 0 & 0 & 0 & 1 & 0 & 0 \\
        0 & 0 & 0 & 0 & 0 & 1 & 0 \\
        0 & 0 & 0 & 0 & 0 & 0 & 1 \\
        b_{22} c_{22} & b_{21} c_{21} & b_{20} c_{20} & b_{19} c_{19} & b_{18} c_{18} & b_{17} c_{17} & b_{16} c_{16} \\
        b_{22} c_{22}^2 & b_{21} c_{21}^2 & b_{20} c_{20}^2 & b_{19} c_{19}^2 & b_{18} c_{18}^2 & b_{17} c_{17}^2 & b_{16} c_{16}^2 \\
        b_{22} c_{22}^3 & b_{21} c_{21}^3 & b_{20} c_{20}^3 & b_{19} c_{19}^3 & b_{18} c_{18}^3 & b_{17} c_{17}^3 & b_{16} c_{16}^3 \\
        b_{22} c_{22}^4 & b_{21} c_{21}^4 & b_{20} c_{20}^4 & b_{19} c_{19}^4 & b_{18} c_{18}^4 & b_{17} c_{17}^4 & b_{16} c_{16}^4 \\
    \end{array}
    \right],
\end{equation}
\begin{equation}
    \matrix{L}_{12,4}^D = \matrix{L}_{13,5}^D = \matrix{L}_{14,6}^D
    = \left[
    \begin{array}{*{4}{>{\textstyle}c }}
        b_{22} & b_{21} & b_{20} & b_{19} \\
        0 & 1 & 0 & 0 \\
        0 & 0 & 1 & 0 \\
        0 & 0 & 0 & 1 \\
        0 & 0 & 0 & 0 \\
        0 & 0 & 0 & 0 \\
        0 & 0 & 0 & 0 \\
        b_{22} c_{22} & b_{21} c_{21} & b_{20} c_{20} & b_{19} c_{19} \\
        b_{22} c_{22}^2 & b_{21} c_{21}^2 & b_{20} c_{20}^2 & b_{19} c_{19}^2 \\
        b_{22} c_{22}^3 & b_{21} c_{21}^3 & b_{20} c_{20}^3 & b_{19} c_{19}^3 \\
        b_{22} c_{22}^4 & b_{21} c_{21}^4 & b_{20} c_{20}^4 & b_{19} c_{19}^4 \\
    \end{array}
    \right],
\end{equation}
\begin{equation}
    \matrix{L}_{13,2}^D = \matrix{L}_{14,3}^D
    = \left[
    \begin{array}{*{2}{>{\textstyle}c }}
        b_{22} & b_{21}\\
        0 & 1 \\
        0 & 0 \\
        0 & 0 \\
        0 & 0 \\
        0 & 0 \\
        0 & 0 \\
        b_{22} c_{22} & b_{21} c_{21} \\
        b_{22} c_{22}^2 & b_{21} c_{21}^2 \\
        b_{22} c_{22}^3 & b_{21} c_{21}^3 \\
        b_{22} c_{22}^4 & b_{21} c_{21}^4 \\
    \end{array}
    \right],
\end{equation}
\begin{equation}
    \matrix{L}_{14,1}^D
    = \left[
    \begin{array}{*{1}{>{\textstyle}c }}
        b_{22}\\
        0 \\
        0 \\
        0 \\
        0 \\
        0 \\
        0 \\
        b_{22} c_{22} \\
        b_{22} c_{22}^2 \\
        b_{22} c_{22}^3 \\
        b_{22} c_{22}^4 \\
    \end{array}
    \right].
\end{equation}

The right-hand side $\vector{r}_D$ of the $D$-system is also assembled from the equations in \Cref{tab:assembly_D_10}. We omit the details of the exact formulas for $\vector{r}_D$ since they have little affect on the understanding of the theory and algorithms we present.

\begin{remark*}
    In general, the linear system for the $D$-system is a block lower-triangular matrix. Upon certain reordering of the variables, any diagonal block can be written as 
    \begin{equation}
        \matrix{L}_{i,i}^D = \begin{bmatrix}
            \matrix{V}_{i,i}^D & \matrix{W}_{i,i}^D \\
            \matrix{0} & \matrix{I}
        \end{bmatrix}
    \end{equation}
    where $\matrix{V}_{i,i}^D$ is a square Vandermonde matrix and $\matrix{W}_{i,i}^D$ is a non-square Vandermonde matrix.
\end{remark*}

\subsection{Assembly of the $Q$-system}

The variables and equations for the $Q$-system are introduced in the example with $p=10$. To assemble them into a structured linear system, we adopt a particular ordering of the variables. For elements $(i,j)$ in the variable index set $I^Q$,  we define a partial ordering `$<$': $(i,j) < (i',j')$ if and only if $i < i'$ or $i = i'$ and $j < j'$. The variables are then ordered as $(2,1)$, $(3,1)$, $(3,2)$, $(4,1)$, $(4,2)$, $(5,1)$, $(5,2)$, $(5,3)$, $(5,4)$, $\dots$. As shown in \Cref{fig:assembly_Q_10}, we array them explicitly row by row. The RK method for $p = 10$ has $s = 22$ stages. The equations for each variable is given in \Cref{tab:assembly_Q_10}.

\begin{figure}[h!]
    \centering
    \input{tikz/tikz_q_variable_ordering_Q4_p10_QDWithClustersMinimalV1.tex}
    \caption{The ordering of the variables in the $Q$-system.}
    \label{fig:assembly_Q_10}
\end{figure}

\begin{table}[!h]
    \centering
    \caption{The equations for each variable in the $Q$-system.}
    \begin{tabular}[t]{cc}
        \toprule
        Variables & Equations \\
        \midrule
        $a_{2,1}$ & $\vector{q}_0 \cdot \vector{e}_2 = 0$ \\ \midrule
        $a_{3,1}$ & $\vector{q}_0 \cdot \vector{e}_3 = 0$ \\ 
        $a_{3,2}$ & $\vector{q}_1 \cdot \vector{e}_3 = 0$ \\ \midrule
        $a_{4,1}$ & $\vector{q}_0 \cdot \vector{e}_4 = 0$ \\ 
        $a_{4,2}$ & $\vector{q}_1 \cdot \vector{e}_4 = 0$ \\ \midrule
        $a_{5,1}$ & $\vector{q}_0 \cdot \vector{e}_5 = 0$ \\ 
        $a_{5,2}$ & $(\matrix{A} \vector{e}_2) \cdot \vector{e}_5 = 0$ \\ 
        $a_{5,3}$ & $\vector{q}_1 \cdot \vector{e}_5 = 0$ \\ 
        $a_{5,4}$ & $\vector{q}_2 \cdot \vector{e}_5 = 0$ \\ \midrule
        $a_{6,1}$ & $\vector{q}_0 \cdot \vector{e}_6 = 0$ \\ 
        $a_{6,2}$ & $(\matrix{A} \vector{e}_2) \cdot \vector{e}_6 = 0$ \\ 
        $a_{6,3}$ & $\vector{q}_1 \cdot \vector{e}_6 = 0$ \\ 
        $a_{6,4}$ & $\vector{q}_2 \cdot \vector{e}_6 = 0$ \\ \midrule
        $a_{7,1}$ & $\vector{q}_0 \cdot \vector{e}_7 = 0$ \\ 
        $a_{7,2}$ & $(\matrix{A} \vector{e}_2) \cdot \vector{e}_7 = 0$ \\ 
        $a_{7,3}$ & $\vector{q}_1 \cdot \vector{e}_7 = 0$ \\ 
        $a_{7,4}$ & $\vector{q}_2 \cdot \vector{e}_7 = 0$ \\ \midrule
        \multicolumn{2}{c}{For $8 \leq i \leq 22$:} \\
        $a_{i,1}$ & $\vector{q}_0 \cdot \vector{e}_i = 0$ \\ 
        $a_{i,2}$ & $(\matrix{A} \vector{e}_2) \cdot \vector{e}_i = 0$ \\ 
        $a_{i,3}$ & $(\matrix{A} \vector{e}_3) \cdot \vector{e}_i = 0$ \\ 
        $a_{i,4}$ & $(\matrix{A} \vector{e}_4) \cdot \vector{e}_i = 0$ \\ 
        $a_{i,5}$ & $\vector{q}_1 \cdot \vector{e}_i = 0$ \\ 
        $a_{i,6}$ & $\vector{q}_2 \cdot \vector{e}_i = 0$ \\ 
        $a_{i,7}$ & $\vector{q}_3 \cdot \vector{e}_i = 0$ \\ 
        \bottomrule
    \end{tabular}
    \label{tab:assembly_Q_10}
\end{table}

Using the definition of $\vector{q}_n = \matrix{A} \vector{c}^{\odot n} - \frac{1}{n+1} \vector{c}^{\odot n+1}$. The system can be assembled as follows:

\begin{equation}
    \matrix{M}_Q = \operatorname{BlockDiag}\left( \matrix{L}_2^Q, \matrix{L}_3^Q,\, \dots, \, \matrix{L}_{22}^Q \right)
\end{equation}
where the subscripts indicate which row the variable belongs to, and 
\begin{equation}
    \matrix{L}_2^Q = \begin{bmatrix}
        1
    \end{bmatrix}, \quad 
    \matrix{L}_3^Q = \matrix{L}_4^Q = \begin{bmatrix}
        1 & 1 \\
        c_1 & c_2
    \end{bmatrix}, 
\end{equation}
\begin{equation}
    \matrix{L}_5^Q = \matrix{L}_6^Q = \matrix{L}_7^Q = \begin{bmatrix}
        1 & 1 & 1 & 1 \\
        0 & 1 & 0 & 0 \\
        c_1 & c_2 & c_3 & c_4 \\
        c_1^2 & c_2^2 & c_3^2 & c_4^2 \\
    \end{bmatrix}, 
\end{equation}
\begin{equation}
    \matrix{L}_i^Q = \begin{bmatrix}
        1 & 1 & 1 & 1 & 1 & 1 & 1 & 1 \\
        0 & 1 & 0 & 0 & 0 & 0 & 0 & 0 \\
        0 & 0 & 1 & 0 & 0 & 0 & 0 & 0 \\
        0 & 0 & 0 & 1 & 0 & 0 & 0 & 0 \\
        c_1 & c_2 & c_3 & c_4 & c_5 & c_6 & c_7 & c_8 \\
        c_1^2 & c_2^2 & c_3^2 & c_4^2 & c_5^2 & c_6^2 & c_7^2 & c_8^2 \\
        c_1^3 & c_2^3 & c_3^3 & c_4^3 & c_5^3 & c_6^3 & c_7^3 & c_8^3 \\
    \end{bmatrix}, \quad \text{for } 8 \leq i \leq 22.
\end{equation}

The right hand side $\vector{r}_Q$ of the linear system is also assembled from the equations in \Cref{tab:assembly_Q_10}. It is not difficult yet tedious to write it out, so we omit the details here. For conceptual understanding of the solvability and condition number of the linear system, focusing on $\matrix{M}_Q$ is adequate. For implementation, some variables (the shadowed ones in \Cref{fig:assembly_Q_10}) can be decoupled from the system and their values can be determined a priori. Therefore, we only need to solve a smaller block diagonal linear system. Efficient algorithms can be adopted to obtain the solution.

\begin{remark*}
    In general, the linear system for the $Q$-system is a block diagonal matrix. Upon certain permutations of rows and columns, any diagonal block can be written as 
    \begin{equation}
        \matrix{L}_i^Q = \begin{bmatrix}
            \matrix{V}_i^Q & \matrix{W}_i^Q \\
            \matrix{0} & \matrix{I} 
        \end{bmatrix}
    \end{equation}
    where $\matrix{V}_i^Q$ is a square Vandermonde matrix and $\matrix{W}_i^Q$ is a non-square Vandermonde matrix.
\end{remark*}

\section{Efficiency of Construction}

The recursive formulation leads not only to ERK schemes satisfying order conditions but also to a highly efficient algorithm for generating ERK coefficients. This subsection present a mathematical analysis of its computational complexity.

\paragraph{Algorithmic structure.}
After the recursion for constructing $Q$ and $D$-type spaces is specified, all unknown coefficients of the RK tableau are determined by two linear systems:
\begin{enumerate}[(i)]
  \item a \emph{$D$-system}, derived from the construction of $D$-type spaces, which is a block lower-triangular linear system;
  \item a \emph{$Q$-system}, derived from the construction of $Q$-type spaces, which is a block diagonal linear system.
\end{enumerate}
The two systems are solved sequentially: the $D$-system is solved first, yielding all bottom-right coefficients in $\matrix{A}$, and its result is then substituted into the $Q$-system to determine left part of $\matrix{A}$. No nonlinear coupling between the two systems remains.

\paragraph{Complexity model.}
Let $p$ denote the order of an RK scheme and $s=s(p)$ the resulting number of stages ($s(p)\sim p^2/4$ asymptotically). Let the $D$-system have diagonal blocks of sizes $d_1,\ldots,d_{s_2-1}$, where $s_2 = n(n+1)/2$; the $Q$-system have diagonal blocks of sizes $q_1,\ldots,q_{s-1}$. 
The total number of unknowns is
\begin{equation*}
    \mathcal{N} = \mathcal{N}_D + \mathcal{N}_Q = \sum_{i=1}^{s_2-1} d_i + \sum_{j=1}^{s-1} q_j \le \sum_{i=1}^{s_2-1} i + \sum_{j=1}^{s-1} (l+1) = \mathcal{O}(s^2), 
\end{equation*}
where $\mathcal{N}_D$ and $\mathcal{N}_Q$ are the number of unknowns in the $D$- and $Q$-system, respectively. In the following complexity analysis, we assume that solving a dense $r\times r$ block requires $\Theta(r^3)$ arithmetic operations.

\begin{proposition}[Cost of solving the $D$-system]
\label{prop:Dsystem-cost}
If the $D$-system matrix is block lower-triangular with diagonal blocks of sizes $d_i$, then the total arithmetic cost of solving the $D$-system is
\begin{equation*}
    \mathcal{C}_D = \Theta \Big( \sum_{i=1}^{s_2-1} d_i^3 + \sum_{i=1}^{s_2-1} \sum_{j < i} d_i d_j \Big)
    \leq \mathcal{O}(s^4).
\end{equation*}
\end{proposition}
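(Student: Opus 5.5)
The plan is to analyse block forward substitution on the block lower-triangular matrix $\matrix{M}_D$ under the stated cost model. Partition the unknowns into $x_1,\dots,x_{s_2-1}$ according to the diagonal blocks $\matrix{L}^D_{i,i}$, with $x_i$ of size $d_i$. Partition the right-hand side $\vector{r}_D$ conformally. Forward substitution computes, for $i=1,2,\dots$,
\begin{equation*}
    x_i = (\matrix{L}^D_{i,i})^{-1}\Big(\vector{r}_{D,i} - \sum_{j<i} \matrix{L}^D_{i,j} x_j\Big).
\end{equation*}

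The cost splits into two parts, which I will count separately. First, each diagonal solve costs $\Theta(d_i^3)$ by the stated assumption on dense $r\times r$ blocks. Second, each off-diagonal product $\matrix{L}^D_{i,j}x_j$ with a $d_i\times d_j$ block costs $\Theta(d_id_j)$ multiply-adds, and the subtraction adds only $O(d_i)$. Summing over all blocks gives the upper bound $\mathcal{C}_D = O\big(\sum_i d_i^3 + \sum_i\sum_{j<i} d_id_j\big)$. For the matching lower bound I charge each diagonal block its $\Theta(d_i^3)$ and, in the worst case, treat all subdiagonal blocks as present and dense; this yields the $\Theta$ form. The $p=10$ example has many zero off-diagonal blocks, and I read the $\Theta$ as referring to this dense-worst-case formula.

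It remains to bound the expression by $\mathcal{O}(s^4)$, and this is the main step. For the cross term,
\begin{equation*}
    \sum_i\sum_{j<i} d_id_j \le \tfrac12\Big(\sum_i d_i\Big)^2 = \tfrac12\mathcal{N}_D^2 = \mathcal{O}(s^4),
\end{equation*}
using $\mathcal{N}_D\le \mathcal{N}=\mathcal{O}(s^2)$ from the complexity model. For the cubic term I need a uniform bound on block sizes, not just the total. From the assembly (e.g.\ the blocks $\matrix{L}^D_{i,i}$ of sizes $1,2,4,7,11$), each diagonal block has size at most the number of rows in the $D$-region, so $d_i\le s_2 = n(n+1)/2 = \mathcal{O}(s)$. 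Then
\begin{equation*}
    \sum_i d_i^3 \le \max_i d_i^2 \sum_i d_i \le \mathcal{O}(s^2)\cdot\mathcal{O}(s^2) = \mathcal{O}(s^4).
\end{equation*}

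The obstacle I expect is justifying $\max_i d_i=\mathcal{O}(s)$ in general rather than only for $p=10$. I would argue it from the index sets $I^D_k$: each column's variables lie in rows of the $D$-region, which has height $s_2-1<s$. Failing that, a cruder alternative is $\sum_i d_i^3\le(\sum_i d_i)^3=\mathcal{O}(s^6)$, but this is too weak for the claimed $\mathcal{O}(s^4)$. The per-block bound is therefore essential.
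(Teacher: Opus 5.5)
Your proposal is correct and is essentially the paper's argument: blockwise substitution costing $\Theta(d_i^3)$ per diagonal solve and $\Theta(d_id_j)$ per coupling, with the cross term bounded by $\tfrac12\mathcal{N}_D^2=\mathcal{O}(s^4)$. The only difference is in the cubic term: the paper uses the structural bound $d_i\le i$ to get $\sum_{i=1}^{s_2-1} i^3=\mathcal{O}(s_2^4)$, whereas you use $\max_i d_i\le s_2$ together with $\sum_i d_i=\mathcal{N}_D$, and both give $\mathcal{O}(s^4)$ because $s_2\le s$.
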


\begin{proof}
Backward substitution proceeds blockwise: each diagonal block solve costs $\Theta(d_i^3)$, and each coupling to preceding blocks adds
$\Theta(d_i d_j)$ work bounded by the same order. Summing over all blocks yields 
\begin{align*}
    \mathcal{C}_D &= \Theta\!\left( \sum_{i=1}^{s_2-1} d_i^3 + \sum_{i=1}^{s_2-1} \sum_{j < i} d_i d_j \right) 
    = \Theta\!\left( \sum_{i=1}^{s_2-1} d_i^3 + \frac{1}{2} \Big( \big( \sum_{i=1}^{s_2-1} d_i \big)^2 - \sum_{i=1}^{s_2-1} d_i^2 \Big) \right) \\
    &= \Theta\!\left( \sum_{i=1}^{s_2-1} d_i^3 + \frac{1}{2} \Big( \mathcal{N}_D^2 - \sum_{i=1}^{s_2-1} d_i^2 \Big) \right).
\end{align*}
By our construction, $d_i \leq i$, so 
\begin{align*}
    \mathcal{C}_D \leq \mathcal{O}\!\left( \sum_{i=1}^{s_2-1} i^3 + \frac{1}{2} \mathcal{N}_D^2 \right) = \mathcal{O}\!\left( s^4  \right).
\end{align*}
\end{proof}

\begin{proposition}[Cost of solving the $Q$-system]
\label{prop:Qsystem-cost}
If the $Q$-system is block diagonal with blocks of size $q_j$,
then the total cost of solving it is
\begin{equation*}
    \mathcal{C}_Q = \Theta \Big( \sum_{j=1}^{s-1} q_j^3 \Big)
    \leq \mathcal{O}( s^4 ).
\end{equation*}
\end{proposition}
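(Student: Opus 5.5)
Because the $Q$-system matrix is block diagonal, $\matrix{M}_Q = \operatorname{BlockDiag}(\matrix{L}_2^Q,\dots,\matrix{L}_s^Q)$, it decouples into $s-1$ independent square systems, one per row $i$ of $\matrix{A}$. The plan is to cost each block separately, sum, and then bound the sum using the block sizes from the construction.

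\textbf{Step 1 (decoupling and the exact order).} Solving $\matrix{M}_Q \vector{x} = \vector{r}_Q$ is equivalent to solving $\matrix{L}_{j}^Q \vector{x}_j = \vector{r}_{Q,j}$ for each block. Unlike the $D$-system in \Cref{prop:Dsystem-cost}, there are no off-diagonal coupling terms, so there is no $\sum_{j<i} q_i q_j$ contribution. Under the stated model, a dense $q_j \times q_j$ block costs $\Theta(q_j^3)$. Assembling $\vector{r}_Q$ and scattering the solution back costs only $\mathcal{O}(\sum_j q_j)$, which is dominated. Summing the block costs gives $\mathcal{C}_Q = \Theta(\sum_{j=1}^{s-1} q_j^3)$.

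\textbf{Step 2 (upper bound).} From the construction (see the $p=10$ assembly, where each $\matrix{L}_i^Q$ has size at most $l+1 = 7$ or $8$), every block satisfies $q_j \le l+1$. This is the same bound used in the unknown count $\mathcal{N}_Q \le \sum_j (l+1)$. The number of $Q$-stages $l$ grows like $\mathcal{O}(p) = \mathcal{O}(\sqrt{s})$, since $s \sim p^2/4$. Even without this refinement, the crude bound $q_j \le l+1 \le s$ gives
\begin{equation*}
\sum_{j=1}^{s-1} q_j^3 \le (s-1)(l+1)^3 \le s \cdot s^3 = \mathcal{O}(s^4),
\end{equation*}
which is the claimed estimate. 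Using $l = \mathcal{O}(\sqrt{s})$ one actually gets the sharper $\mathcal{O}(s^{5/2})$, and I would mention this as a remark.

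\textbf{Main obstacle.} The only nontrivial point is justifying that each diagonal block is nonsingular, so that a direct $\Theta(q_j^3)$ solve is well defined. I would handle this with the remark above: after permutation, $\matrix{L}_i^Q = \begin{bmatrix} \matrix{V}_i^Q & \matrix{W}_i^Q \\ \matrix{0} & \matrix{I}\end{bmatrix}$ with $\matrix{V}_i^Q$ a square Vandermonde matrix. This block is invertible whenever the relevant nodes $c_j$ are distinct, which is the standing assumption of the construction. Since the proposition takes the cost model as given, the proof then reduces to the counting in Steps 1--2.
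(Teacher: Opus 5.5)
Your argument is correct and is essentially the paper's. Block-diagonality decouples $\matrix{M}_Q$ into independent solves costing $\Theta(q_j^3)$ each under the stated model, and $q_j \le l+1 \le s$ gives $\sum_j q_j^3 \le (s-1)(l+1)^3 = \mathcal{O}(s^4)$.

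Drop the proposed sharpening to $\mathcal{O}(s^{5/2})$, though, because $l$ is not $\mathcal{O}(p)$. It equals $1+2+\cdots+(m-1) = m(m-1)/2$, the sum of the $Q$-group sizes; for example $l=6$ when $p=10$. This is also forced by $s=(l+1)+n(n+1)/2 \sim p^2/4$ with $n \approx p/2$, so $l=\Theta(s)$. Since the $n(n+1)/2=\Theta(s)$ rows below the $Q$-region each carry a full block of size $l+1$, $\sum_j q_j^3$ really is of order $s^4$ in the dense-block model.
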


\begin{proof}
Each block is independent and can be solved separately.
For block of size $q_j$, the block solve cost is $\Theta(q_j^3)$,
and summing over $s-1$ blocks yields total cost of
\begin{equation*}
    \mathcal{C}_Q = \Theta \Big( \sum_{j=1}^{s-1} q_j^3 \Big)
\end{equation*}
using the inequality $q_j \leq l+1$, we have
\begin{equation*}
    \mathcal{C}_Q \leq \mathcal{O}\!\left( \sum_{j=1}^{s-1} (l+1)^3 \right) = \mathcal{O}(s^4).
\end{equation*}
\end{proof}

\begin{theorem}[Overall computational complexity]
\label{thm:overall-complexity}
Under the structural assumptions above, the total arithmetic cost of constructing all coefficients is
\begin{equation}
\mathcal{C}_{\mathrm{total}}
= \mathcal{C}_D + \mathcal{C}_Q + \mathcal{O}(\mathcal{N})
\leq \mathcal{O}(s^4)
\end{equation}
\end{theorem}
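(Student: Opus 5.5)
The plan is to split the construction into its three sequential phases and bound each one separately, reusing \Cref{prop:Dsystem-cost} and \Cref{prop:Qsystem-cost} for the two linear solves. By the algorithmic structure described above, the construction consists of: (a) assembling and solving the $D$-system, which determines all entries of $\matrix{A}$ in the $D$-region; (b) substituting these values into the right-hand side of the $Q$-system, then assembling and solving it; and (c) bookkeeping work that is linear in the number of unknowns. This bookkeeping covers initializing the free entries to zero, writing the solution back into the tableau, and setting $\vector{b}$ and $\vector{c}$ from the Lobatto rule. Since there is no nonlinear coupling between the systems, the total cost is additive, giving $\mathcal{C}_{\mathrm{total}} = \mathcal{C}_D + \mathcal{C}_Q + \mathcal{O}(\mathcal{N})$.

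For the first two terms I will invoke \Cref{prop:Dsystem-cost}, which gives $\mathcal{C}_D \le \mathcal{O}(s^4)$, and \Cref{prop:Qsystem-cost}, which gives $\mathcal{C}_Q \le \mathcal{O}(s^4)$. For the remainder term, I use the earlier estimate $\mathcal{N} = \mathcal{N}_D + \mathcal{N}_Q = \mathcal{O}(s^2)$, which is dominated by $s^4$.

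The main obstacle is justifying that assembly is not more expensive than the solves; this must be checked so that it can be absorbed into the stated terms. Each matrix entry, as in the explicit blocks $\matrix{L}^D_{i,j}$ and $\matrix{L}^Q_i$, is a product $b_k c_k^r$, a power $c_k^r$, or a constant. There are at most $\sum_i d_i^2 + \sum_{i}\sum_{j<i} d_i d_j = \mathcal{O}(\mathcal{N}_D^2) = \mathcal{O}(s^4)$ nonzero block entries for the $D$-system, and $\sum_j q_j^2 = \mathcal{O}(s(l+1)^2) \le \mathcal{O}(s^3)$ for the $Q$-system. If the powers $c_k^r$ for $r \le p$ are precomputed once at cost $\mathcal{O}(sp) = \mathcal{O}(s^2)$, each entry costs $\mathcal{O}(1)$.

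The right-hand sides $\vector{r}_D$ and $\vector{r}_Q$ also need a bound. They involve the already-known vectors $\vector{d}_k$, $\vector{q}_k$ and the products $\vector{\varepsilon}_{i,j}\times^1\matrix{A}$ and $\matrix{A}\vector{e}_j$ restricted to known entries. Each is a dot product or matrix–vector product of length at most $s$, and there are $\mathcal{O}(\mathcal{N})$ of them, giving $\mathcal{O}(\mathcal{N} s) = \mathcal{O}(s^3)$ work. This is again within $\mathcal{O}(s^4)$, so I will fold it into $\mathcal{C}_D + \mathcal{C}_Q$, or, if the statement is read literally, into the solve terms as a lower-order contribution.

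Summing the three bounds gives $\mathcal{C}_{\mathrm{total}} \le \mathcal{O}(s^4)$, as claimed.
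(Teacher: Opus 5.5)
Your proposal is correct and follows the paper's argument. The paper's proof simply invokes \Cref{prop:Dsystem-cost} and \Cref{prop:Qsystem-cost} and attributes assembly and simple vector operations to the $\mathcal{O}(\mathcal{N})$ term, with $\mathcal{N}=\mathcal{O}(s^2)$. Your extra bookkeeping is more careful than the paper's one-line remark: under the dense block-coupling model, assembling the $D$-system matrix costs $\mathcal{O}(\mathcal{N}_D^2)$, not $\mathcal{O}(\mathcal{N})$, so that cost is really absorbed into $\mathcal{C}_D$. This does not affect the $\mathcal{O}(s^4)$ conclusion.
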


\begin{proof}
The proof is direct from Propositions~\ref{prop:Dsystem-cost} and \ref{prop:Qsystem-cost}. The $\mathcal{O}(\mathcal{N})$ term accounts for assembly and simple vector operations.
\end{proof}

\begin{remark}
In the actual implementation of the algorithm, we further reduce the size of the linear systems by assigning the variables corresponding to the shaded regions in Figures~\ref{fig:tikz_d_conditions_D5_p10} and~\ref{fig:tikz_q_conditions_Q4_p10} in advance. This further reduces the complexity of the algorithm. In practice, the computational cost of solving both the $Q$-system and the $D$-system can be controlled within $\mathcal{O}(s^3 p)$, so the overall complexity can also be reduced to $\mathcal{O}(s^3 p)$.
\end{remark}

\section{Tables of coefficients for constructed methods}

\paragraph{Coefficients for our method of order 4.}

\begin{equation*}
    \renewcommand\arraystretch{1.4}
    \begin{array}{c|cccc}
        0 & 0 & 0 & 0 & 0 \\
        1/2 & 1/2 & 0 & 0 & 0 \\
        1/2 & 0 & 1/2 & 0 & 0 \\
        1 & 0 & 0 & 1 & 0 \\
        \hline
          & 1/6 & 1/3 & 1/3 & 1/6
    \end{array}
\end{equation*}

\paragraph{Coefficients for our method of order 6.}

\begin{equation*}
    \renewcommand\arraystretch{1.4}
    \begin{array}{c|cccccccc}
        0 & 0 & 0 & 0 & 0 & 0 & 0 & 0 & 0 \\
        c_2 & a_{21} & 0 & 0 & 0 & 0 & 0 & 0 & 0 \\
        c_3 & a_{31} & a_{32} & 0 & 0 & 0 & 0 & 0 & 0 \\
        c_4 & a_{41} & a_{42} & 0 & 0 & 0 & 0 & 0 & 0 \\
        c_5 & a_{51} & a_{52} & 1/2 & a_{54} & 0 & 0 & 0 & 0 \\
        c_6 & a_{61} & a_{62} & a_{63} & 1/3 & 0 & 0 & 0 & 0 \\
        c_7 & a_{71} & a_{72} & a_{73} & a_{74} & a_{75} & a_{76} & 0 & 0 \\
        1 & 1/6 & 0 & a_{83} & a_{84} & a_{85} & a_{86} & a_{87} & 0 \\
        \hline
        & 1/12 & 0 & 5/36 & 5/24 & 5/36 & 5/24 & 5/36 & 1/12
    \end{array}
\end{equation*}

The precise parameters containing extended floating-point representations are defined as follows:
\begingroup
\small
\begin{longtable}{rl}
    \toprule
    Parameter & Precise Numerical Value \\
    \midrule
    \endhead
    \bottomrule
    \endfoot
        $c_2 = c_4 = c_6$ & \texttt{0.2763932022500210303590826331268723764559} \\
        $c_3 = c_5 = c_7$ & \texttt{0.7236067977499789696409173668731276235441} \\
        $a_{21}$ & \texttt{0.2763932022500210303590826331268723764559} \\
        $a_{31}$ & \texttt{-0.2236067977499789696409173668731276235441} \\
        $a_{32}$ & \texttt{0.9472135954999579392818347337462552470881} \\
        $a_{41} = a_{42}$ & \texttt{0.1381966011250105151795413165634361882280} \\
        $a_{51}$ & \texttt{0.5854101966249684544613760503096914353161} \\
        $a_{52}$ & \texttt{-1.8944271909999158785636694674925104941762} \\
        $a_{54}$ & \texttt{1.5326237921249263937432107840559466824042} \\
        $a_{61}$ & \texttt{0.1030056647916491413674311390609396862867} \\
        $a_{62}$ & \texttt{-0.1381966011250105151795413165634361882280} \\
        $a_{63}$ & \texttt{-0.0217491947499509291621405227039644549361} \\
        $a_{71}$ & \texttt{-0.2236067977499789696409173668731276235441} \\
        $a_{72}$ & \texttt{0.9472135954999579392818347337462552470881} \\
        $a_{73}$ & \texttt{-0.1381966011250105151795413165634361882280} \\
        $a_{74}$ & \texttt{-1.4208203932499369089227521006193828706322} \\
        $a_{75}$ & \texttt{0.1381966011250105151795413165634361882280} \\
        $a_{76}$ & \texttt{1.4208203932499369089227521006193828706322} \\
        $a_{83}$ & \texttt{-0.0879773408334034345302754437562412548531} \\
        $a_{84}$ & \texttt{0.7893446629166316160681956114552127059068} \\
        $a_{85}$ & \texttt{0.2303276685416841919659021942723936470466} \\
        $a_{86}$ & \texttt{-0.5590169943749474241022934171828190588602} \\
        $a_{87}$ & \texttt{0.4606553370833683839318043885447872940932} \\
\end{longtable}
\endgroup

\paragraph{Coefficients for our method of order 8.}

Denote the Butcher tableau as $(\matrix{A}, \vector{b}^\top, \vector{c})$.
$\matrix{A} = (a_{ij})$, $\vector{b} = (b_i)$, $\vector{c} = (c_i)$.

The precise parameters containing extended floating-point representations are defined as follows:

\begingroup
\small
\begin{longtable}{rl}
    \toprule
    Parameter & Precise Numerical Value \\
    \midrule
    \endhead
    \bottomrule
    \endfoot
        $c_2$ = $a_{21}$ = $c_3$ = $c_7$ = $c_{10}$ & \texttt{0.1726731646460114281008537718765708222154} \\
        $a_{31}$ = $a_{32}$ = $a_{13,11}$ & \texttt{0.0863365823230057140504268859382854111077} \\
        $a_{41}$ & \texttt{-0.2239109809347400004117529496080005305615} \\
        $a_{42}$ & \texttt{0.7239109809347400004117529496080005305615} \\
        $c_5$ = $c_8$ = $c_{11}$ = $c_{13}$ & \texttt{0.8273268353539885718991462281234291777846} \\
        $a_{51}$ = $a_{11,1}$ = $a_{13,1}$ & \texttt{0.3472041832132342858711439808030478211663} \\
        $a_{53}$ = $a_{11,3}$ = $a_{13,3}$ & \texttt{-0.3121452601573282541773983236104129679043} \\
        $a_{54}$ = $a_{11,4}$ = $a_{13,4}$ & \texttt{0.7922679122980825402054005709307943245226} \\
        $a_{61}$ = $a_{12,1}$ & \texttt{0.0086963396884199998627490167973331564795} \\
        $a_{63}$ = $a_{12,3}$ & \texttt{0.3685974296159088892091411830284448571034} \\
        $a_{64}$ = $a_{12,4}$ = $a_{12,9}$ & \texttt{0.1227062306956711109281098001742219864171} \\
        $a_{71}$ & \texttt{0.0763979083933828570644280095984760894169} \\
        $a_{73}$ = $a_{11,8}$ & \texttt{0.1015181575196660317721747904513016209880} \\
        $a_{74}$ & \texttt{-0.0052429012670374607357490281732068881895} \\
        $a_{81}$ & \texttt{-0.7558982639254171433277176566948577492132} \\
        $a_{83}$ & \texttt{0.9364357804719847625321949708312389037128} \\
        $a_{84}$ & \texttt{-2.3768037368942476206162017127923829735677} \\
        $a_{86}$ & \texttt{1.6982683863407949218897867670593032042510} \\
        $a_{87}$ & \texttt{0.8808802249164292069766394152756833481572} \\
        $a_{91}$ & \texttt{0.0763573206231600002745019664053336870410} \\
        $a_{93}$ & \texttt{-0.7371948592318177784182823660568897142068} \\
        $a_{94}$ & \texttt{-0.2454124613913422218562196003484439728342} \\
        $a_{95}$ & \texttt{-0.0272608833024616673872343284806675951493} \\
        $a_{97}$ & \texttt{0.9751775499691283340539009951473342618160} \\
        $a_{10,1}$ & \texttt{0.0664592344637600000784291332586667677260} \\
        $a_{10,3}$ = $a_{11,5}$ & \texttt{-0.1015181575196660317721747904513016209880} \\
        $a_{10,4}$ & \texttt{0.0052429012670374607357490281732068881895} \\
        $a_{10,5}$ & \texttt{0.0040043319862298409561247368066027703658} \\
        $a_{10,6}$ & \texttt{-0.0237373677735720641194965581328262053001} \\
        $a_{11,6}$ & \texttt{-0.6419111699433515349492526724849107715542} \\
        $a_{11,7}$ & \texttt{0.6242905203146565083547966472208259358085} \\
        $a_{11,9}$ & \texttt{0.6419111699433515349492526724849107715542} \\
        $a_{11,10}$ & \texttt{-0.6242905203146565083547966472208259358085} \\
        $a_{12,5}$ & \texttt{0.0030105722119316669068558872713336428276} \\
        $a_{12,6}$ & \texttt{-0.1227062306956711109281098001742219864171} \\
        $a_{12,7}$ & \texttt{-0.9098701805298291672671397181783341070689} \\
        $a_{12,8}$ & \texttt{-0.0030105722119316669068558872713336428276} \\
        $a_{12,10}$ & \texttt{0.9098701805298291672671397181783341070689} \\
        $a_{13,5}$ & \texttt{-0.0863365823230057140504268859382854111077} \\
        $a_{13,6}$ & \texttt{-1.2606885110156016333701275857118920804337} \\
        $a_{13,12}$ & \texttt{1.2606885110156016333701275857118920804337} \\
        $a_{14,5}$ & \texttt{-0.0676381975765999989324923528681467726183} \\
        $a_{14,6}$ & \texttt{0.7323606311023881494494908036993597015278} \\
        $a_{14,7}$ & \texttt{-0.5562780543026933337603363921860746242860} \\
        $a_{14,8}$ & \texttt{0.1039860419798822221154714575090368994840} \\
        $a_{14,9}$ & \texttt{0.0206135459281066663413310027788637783218} \\
        $a_{14,10}$ & \texttt{0.9451669431915822226492252810749635131749} \\
        $a_{14,11}$ & \texttt{0.1175136814952022219019699280826662540077} \\
        $a_{14,12}$ & \texttt{-0.5307519548082725935685995842560012576273} \\
        $a_{14,13}$ & \texttt{0.2350273629904044438039398561653325080154} \\
\end{longtable}
\endgroup

\paragraph{Coefficients for our optimized method of order 8.}
Denote the Butcher tableau as $(\matrix{A}, \vector{b}^\top, \vector{c})$.
$\matrix{A} = (a_{ij})$, $\vector{b} = (b_i)$, $\vector{c} = (c_i)$.

The precise parameters containing extended floating-point representations are defined as follows:

\begingroup
\small
\begin{longtable}{rl}
    \toprule
    Parameter & Precise Numerical Value \\
    \midrule
    \endhead
    \bottomrule
    \endfoot
        $b_1$ = $b_{14}$ & \texttt{4.99999999999999999999999999999998459e-02} \\
        $c_2$ = $a_{21}$ = $c_3$ = $c_7$ = $c_{10}$ & \texttt{1.72673164646011428100853771876570835e-01} \\
        $a_{31}$ & \texttt{8.63365823230057140504268859382852733e-02} \\
        $a_{32}$ & \texttt{8.63365823230057140504268859382855621e-02} \\
        $c_4$ = $c_6$ = $c_9$ = $c_{12}$ & \texttt{0.5} \\
        $a_{41}$ & \texttt{-2.23910980934740000411752949607999604e-01} \\
        $a_{42}$ & \texttt{-1.94610058394290906661483271413916266e-01} \\
        $a_{43}$ & \texttt{9.18521039329030907073236221021915678e-01} \\
        $c_5$ = $c_8$ = $c_{11}$ = $c_{13}$ & \texttt{8.27326835353988571899146228123427239e-01} \\
        $a_{51}$ = $a_{11,1}$ & \texttt{3.47204183213234285871143980803053631e-01} \\
        $a_{53}$ = $a_{11,3}$ = $a_{13,3}$ & \texttt{-3.12145260157328254177398323610416044e-01} \\
        $a_{54}$ = $a_{11,4}$ = $a_{13,4}$ & \texttt{7.92267912298082540205400570930791192e-01} \\
        $b_5$ = $b_8$ = $b_{11}$ = $b_{13}$ & \texttt{6.80555555555555555555555555555555384e-02} \\
        $a_{61}$ = $a_{12,1}$ & \texttt{9.31660703052918236121968330991439771e-03} \\
        $a_{63}$ = $a_{12,3}$ & \texttt{3.67400047375727443168913762088028958e-01} \\
        $a_{64}$ = $a_{12,4}$ & \texttt{1.235332538184833542594040221909992e-01} \\
        $a_{65}$ & \texttt{-2.4990822473997978953746758894255559e-04} \\
        $b_6$ = $b_9$ = $b_{12}$ & \texttt{1.18518518518518518518518518518518924e-01} \\
        $a_{71}$ & \texttt{-1.33173675734999488712059654478026333e-02} \\
        $a_{73}$ & \texttt{2.74707153333044796263479247308849614e-01} \\
        $a_{74}$ & \texttt{1.08626203179506537330664071859581443e-01} \\
        $a_{75}$ & \texttt{3.61466481093477826918414849170989833e-02} \\
        $a_{76}$ & \texttt{-2.33489472402387739313925066761157727e-01} \\
        $b_7$ = $b_{10}$ & \texttt{1.36111111111111111111111111111111077e-01} \\
        $a_{81}$ & \texttt{-7.55898263925417143327717656694715823e-01} \\
        $a_{83}$ & \texttt{9.36435780471984762532194970830611803e-01} \\
        $a_{84}$ & \texttt{-2.37680373689424762061620171279259544} \\
        $a_{85}$ & \texttt{4.44444444444444444444444444444443075e-01} \\
        $a_{86}$ & \texttt{1.69826838634079492188978676705958720} \\
        $a_{87}$ & \texttt{8.8088022491642920697663941527611491e-01} \\
        $a_{91}$ & \texttt{7.51167859389416352775606333800818414e-02} \\
        $a_{93}$ & \texttt{-7.34800094751454886337827524175789826e-01} \\
        $a_{94}$ & \texttt{-2.47066507636966708518808044382103171e-01} \\
        $a_{95}$ & \texttt{-5.53372058757747637507729438926114042e-02} \\
        $a_{96}$ & \texttt{4.58333333333333333333333333333329225e-01} \\
        $a_{97}$ & \texttt{9.75177549969128334053900995147236087e-01} \\
        $a_{98}$ & \texttt{2.85761390227930559426135505898599439e-02} \\
        $a_{10,1}$ & \texttt{1.56174510430642806014063108304982221e-01} \\
        $a_{10,3}$ & \texttt{-2.74707153333044796263479247308898725e-01} \\
        $a_{10,4}$ & \texttt{-1.08626203179506537330664071859481295e-01} \\
        $a_{10,5}$ & \texttt{-2.86037726718601566106090287612639699e-02} \\
        $a_{10,6}$ & \texttt{2.02167356316777136522005280486935188e-01} \\
        $a_{10,7}$ & \texttt{2.22222222222222222222222222222199967e-01} \\
        $a_{10,8}$ & \texttt{-3.53854345125778512510771934924967468e-03} \\
        $a_{10,9}$ & \texttt{7.58474831203853867242322814134885734e-03} \\
        $a_{11,5}$ & \texttt{-6.76097970480980249082804772955769433e-02} \\
        $a_{11,6}$ & \texttt{-7.14592594770649369099442123190306199e-01} \\
        $a_{11,7}$ & \texttt{6.24290520314656508354796647220625627e-01} \\
        $a_{11,8}$ & \texttt{6.76097970480980249082804772955769433e-02} \\
        $a_{11,9}$ & \texttt{7.14592594770649369099442123190306199e-01} \\
        $a_{11,10}$ & \texttt{-6.24290520314656508354796647220625627e-01} \\
        $a_{12,5}$ & \texttt{2.48471152903513119873851615240594623e-02} \\
        $a_{12,6}$ & \texttt{-9.38801470944387180938011431937732155e-02} \\
        $a_{12,7}$ & \texttt{-9.09870180529829167267139718178278317e-01} \\
        $a_{12,8}$ & \texttt{-1.81383613151035491089021755340167436e-02} \\
        $a_{12,9}$ & \texttt{9.38801470944387180938011431937732155e-02} \\
        $a_{12,10}$ & \texttt{9.09870180529829167267139718178278317e-01} \\
        $a_{12,11}$ & \texttt{-6.95866219998774266802045357898527425e-03} \\
        $a_{13,1}$ & \texttt{3.47204183213234285871143980803059794e-01} \\
        $a_{13,5}$ & \texttt{-1.21427404273258106125581474520309672e-01} \\
        $a_{13,6}$ & \texttt{-1.26068851101560163337012758571184525} \\
        $a_{13,11}$ & \texttt{1.21427404273258106125581474520309672e-01} \\
        $a_{13,12}$ & \texttt{1.26068851101560163337012758571184525} \\
        $c_{14}$ & \texttt{1.0} \\
        $a_{14,1}$ & \texttt{1.15059735858928208403907271827790529e-32} \\
        $a_{14,3}$ & \texttt{3.4074855701829383452570595988155705e-31} \\
        $a_{14,4}$ & \texttt{2.67509725499625842147317014859493508e-31} \\
        $a_{14,5}$ & \texttt{-6.02784742532391271376259265855368533e-02} \\
        $a_{14,6}$ & \texttt{7.83607001949023514488261749442154439e-01} \\
        $a_{14,7}$ & \texttt{-5.56278054302693333760336392186287998e-01} \\
        $a_{14,8}$ & \texttt{1.27894219614764308765368812757314217e-01} \\
        $a_{14,9}$ & \texttt{-3.06328249185286986974399429642385722e-02} \\
        $a_{14,10}$ & \texttt{9.4516694319158222264922528107480591e-01} \\
        $a_{14,11}$ & \texttt{8.62457805369592634572061465517691453e-02} \\
        $a_{14,12}$ & \texttt{-5.30751954808272593568599584255938345e-01} \\
        $a_{14,13}$ & \texttt{2.3502736299040444380393985616533733e-01} \\
\end{longtable}
\endgroup

\paragraph{Coefficients of our methods of order 10, 12 and 14}

The coefficient table of our methods of order 10, 12 and 14 are quite large, and we have made them public on GitHub, see \cite{he2026qdwithclustersminimalv1rktableaux}. A small piece of Julia script to load, use and test the order of the methods is also provided in the same repository. Try it if you are interested.

\appendix

\printbibliography

\end{document}